\documentclass[12pt]{amsart}
\usepackage{amsmath}
\usepackage{latexsym}
\usepackage{amssymb}
\usepackage{graphicx}
\usepackage{tikz}

\usetikzlibrary{calc}
\usetikzlibrary{shadows}
\usetikzlibrary{shapes}

\DeclareMathOperator{\co}{co}

\DeclareMathOperator*{\argmin}{arg \ min}
\newtheorem{theorem}{Theorem}
\newtheorem{proposition}[theorem]{Proposition}
\newtheorem{lemma}[theorem]{Lemma}
\newtheorem{corollary}[theorem]{Corollary}
\newtheorem{definition}[theorem]{Definition}

\newtheorem{remark}[theorem]{Remark}

\usepackage{mathrsfs}
\begin{document}

%
\def\R {{\mathbb{R}}}
\def\N {{\mathbb{N}}}
\def\C {{\mathbb{C}}}
\def\Z {{\mathbb{Z}}}
\def\phi{\varphi}
\def\epsilon{\varepsilon}
\def\ma{{\mathcal A}}
%
\def\tb#1{\|\kern -1.2pt | #1 \|\kern -1.2pt |} 
\def\Qed{\qed\par\medskip\noindent}
%

\title[Singularities Riemannian distance]{Obstacles and Singularities\\ of Riemannian Distance Functions}  
\author{Paolo Albano} 
\address{Dipartimento di Matematica, Universit\`a di Bologna, Piazza di Porta San Donato 5, 40127 Bologna, Italy}
\email{paolo.albano@unibo.it}
 \author{Piermarco Cannarsa} 
\address{Dipartimento di Matematica, 
Universit\`a di Roma "Tor Vergata", Via della Ricerca Scientifica, 00133 Roma, Italy}
\email{cannarsa@mat.uniroma2.it}
\author[Carlo Sinestrari]{Carlo Sinestrari}
\address{Dipartimento di Matematica, 
Universit\`a di Roma "Tor Vergata", Via della Ricerca Scientifica, 00133 Roma, Italy}
\email{sinestra@mat.uniroma2.it}

\date{\today}

\begin{abstract}
 We study the distance function from a point target in the complement of a compact obstacle endowed with a smooth Riemannian metric. We prove that the obstacle necessarily generates singularities of the distance function: every sufficiently high level set contains a singular point. We also show that every singular point outside the obstacle belongs to a nontrivial Lipschitz arc of singularities, thereby extending to the constrained setting classical propagation results for Hamilton--Jacobi equations. Finally, we provide examples showing that these results are essentially sharp, including a nonconvex obstacle for which the distance function is differentiable at every boundary point.
 \end{abstract}

\subjclass[2010]{49J52, 26A27, 26B25, 49L2}
\keywords{distance function, state constraints, semiconcave functions, singularities}

\maketitle

\section{Introduction and main results} 
In this paper we study the singularities of Riemannian distance functions in the presence of obstacles. More precisely, given a compact obstacle $\mathscr O\subset \R^n$ with smooth boundary and a point target $k_0\in \R^n\setminus\mathscr O$, where $n\geq 2$, we investigate the regularity of the distance function from $k_0$ in the closure of the complement of the obstacle. Distance functions play a central role in geometry, optimal control, and Hamilton--Jacobi theory. Their singularities encode the structure of minimizing trajectories and can be regarded as a constrained counterpart of the classical cut locus in Riemannian geometry. 

In the unconstrained setting, the relation between singularities and nonuniqueness of minimizing geodesics is well understood, and the propagation of singularities has been extensively studied through the theory of semiconcave functions and viscosity solutions of Hamilton--Jacobi equations, see e.g. \cite{CS}. Much less is known when state constraints are present. The presence of an obstacle fundamentally modifies the geometry of minimizing trajectories. Even in the Euclidean case, minimizers may touch the boundary of the obstacle, lose smoothness, and exhibit branching phenomena that have no analogue in the unconstrained setting. A natural question is therefore whether the obstacle necessarily creates singularities of the distance function and, if so, how these singularities are organized. 

The present paper continues and completes the analysis initiated in \cite{ABC1}. Besides extending to general Riemannian metrics several results established in \cite{ABC1} for the Euclidean distance, our main goal is to understand the global structure of the singular set generated by the obstacle. In particular, we address the following questions: \begin{itemize} \item Does the presence of a bounded obstacle necessarily create singularities of the distance function? \item Must the boundary of the obstacle contain singular points? \item Do singularities propagate in the presence of state constraints as they do for semiconcave solutions of Hamilton--Jacobi equations? \end{itemize} As we shall see, the answers reveal several phenomena that are specific to the constrained setting. 

To be definite, let us introduce the class $\mathcal{O}$ of the obstacles we deal with. We say that a set $\mathscr{O}$ belongs to the class $\mathcal{O}$ if it is the closure of a bounded open set with
$C^2$ boundary. We denote by \[ X:=\overline{\R^n\setminus \mathscr{O}} \] the closure of the complement of the obstacle and we assume, without loss of generality, that $X$ is arcwise connected. In order to define a metric on $X$, we consider a map taking values in the set of symmetric $n\times n$ matrices, $\R^n\ni x\mapsto A(x)$, such that
 \begin{equation}\label{eq:A} \begin{cases} x\mapsto A(x) \text{ is of class $C^2$}, \\ A(x)\geq I\, c_0^2 \quad \text{as quadratic forms}, \end{cases} \end{equation} 
 for a suitable $c_0>0$ and for every $x\in \R^n$, where $I$ is the identity matrix. Given a curve $\gamma\in C^1([0,1];X)$, we define 
  \[ L(\gamma )= \int_0^1 \langle A(\gamma (t)) \dot{\gamma}(t), \dot{\gamma}(t)\rangle^{1/2} \, dt , \] 
   where $\langle \cdot , \cdot \rangle$ stands for the standard Euclidean scalar product. For every $x,y\in X$, the distance between $x$ and $y$ is given by 
    \begin{equation} \label{def.distance} \mbox{\rm dist}(x,y)=\inf_{\gamma} L(\gamma ), \end{equation} 
     where the infimum is taken over all curves $\gamma\in C^1([0,1];X)$ such that $\gamma (0)=x$ and $\gamma (1)=y$. We then fix an arbitrary point $k_0\in \R^n \setminus \mathscr{O}$ and define, for $x \in X$, 
      \begin{equation} \label{d} d(x)=\mbox{\rm dist}(x,k_0), \end{equation} 
       the distance function from $k_0$. Such a function can be seen as the value function for a constrained minimum time problem with point target $k_0$. Let us also recall that $d$ is the viscosity solution of a suitable boundary value problem for the eikonal equation, see e.g. \cite[Theorem X.1]{CDL}, %
    \begin{equation}\label{eq:iconale} \langle A^{-1} Dd(x),Dd(x)\rangle =1 \qquad \text{in } X\setminus (\partial \mathscr{O}\cup \{ k_0\}). \end{equation} 
     Notice that, due to \eqref{eq:A}, 
      \[ d(x,y)\geq c_0|x-y|,\qquad d(x)\geq c_0|x-k_0|, \qquad \forall x,y\in X, \] 
       where $|\cdot |$ is the Euclidean norm. The last inequality implies that $d$ is coercive, i.e. $\lim_{|x|\to \infty}d(x)=\infty$. It is also easy to see that $d$ is locally Lipschitz continuous on $X$. The regularity of $d$ will be described in more detail in Section 2. 
       
       The problem of minimizing distance in the presence of an obstacle, possibly in infinite-dimensional spaces, has been investigated by several authors, see for instance \cite{AA,Al,ABB,Ca,MS,RS}. Most of the existing literature is concerned with the existence, uniqueness, and regularity of minimizing geodesics. Our perspective is different. 
       
       We focus on the distance function itself and on the relation between its differential properties and the behaviour of minimizing trajectories, in the spirit of dynamic programming and Hamilton--Jacobi theory. 
        More specifically, we study the singular set of the distance function, denoted by $\Sigma(d)$. For interior points of $X$, singularity simply means lack of differentiability of $d$; for boundary points, a slightly modified definition is required, see Definition \ref{insiemes} and Corollary \ref{r:ext}. Since the target point $k_0$ is always singular, independently of the metric and of the obstacle, we restrict our attention to points different from $k_0$. 
        
        When no obstacle is present, it is well known that $x\neq k_0$ is singular if and only if there exists more than one length-minimizing arc between $x$ and $k_0$. Consequently, for some metrics---for instance the Euclidean metric, corresponding to $A(x)\equiv I$---the target is the only singular point, whereas for more general metrics singularities may arise through the crossing of geodesics. 
        
        In the presence of an obstacle the situation becomes substantially richer.  We prove that a point $x$ is singular if and only if there exist at least two minimizing trajectories with different initial velocities from $x$ to $k_0$. At the same time, distinct minimizers may have the same initial velocity and branch only after reaching the boundary of the obstacle, a phenomenon that cannot occur in the unconstrained case. 
       
       Our first main result shows that singularities are not exceptional. In fact, every sufficiently high level set of the distance function necessarily contains singular points. Thus, the presence of a bounded obstacle inevitably creates singularities, regardless of the choice of the Riemannian metric. \begin{theorem}\label{t:existence} Let $d$ be defined by \eqref{d}, for some metric $A(\cdot)$ satisfying \eqref{eq:A}, an obstacle $\mathscr{O}\in \mathcal{O}$ and a target $k_0 \in \R^n \setminus \mathscr{O}$. Let us set \begin{equation} \label{dplus} d_+:=\max_{x\in\partial\mathscr O} d(x). \end{equation} Then, for any $a>d_+$, any connected component of the level set \[ \{ x\in X\mid d(x)=a\} \] has nonempty intersection with the singular set $\Sigma(d)$. \end{theorem}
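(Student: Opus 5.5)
Let $a>d_+$ and let $\Gamma$ be a connected component of the level set $\{d=a\}$. I argue by contradiction and assume $\Gamma\cap\Sigma(d)=\emptyset$. Since $d>d_+$ on $\Gamma$ and $d\le d_+$ on $\partial\mathscr O$, we have $\Gamma\subset \mathrm{int}\,X\setminus\{k_0\}$. By coercivity the level set is compact, and since it is closed, its connected component $\Gamma$ is compact.

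\textbf{Step 1: regularity near $\Gamma$.} A semiconcave function that is differentiable at every point of a compact set is $C^1$ near that set. This follows from the upper semicontinuity of the superdifferential, and I take $d$ to be locally semiconcave in $\mathrm{int}\,X\setminus\{k_0\}$ (Section 2). The set $\Sigma(d)$ is closed there, so $d$ is differentiable on a neighbourhood $U$ of $\Gamma$ and $Dd$ is continuous on $U$. By the eikonal equation \eqref{eq:iconale}, $Dd\neq0$ on $U$. The implicit function theorem then shows that $\Gamma$ is a compact connected $C^1$ hypersurface; it is a whole component of $\{d=a\}\cap U$ and hence open in the level set. By Jordan--Brouwer separation, $\R^n\setminus\Gamma$ has exactly one bounded component $B$. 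Moreover $\nabla_A d = A^{-1}Dd$ is a continuous unit-speed field near $\Gamma$, transversal to $\Gamma$.

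\textbf{Step 2: minimizers are unique and vary continuously.} At each $x\in\Gamma$ the point is not singular. By the characterization stated in the introduction, all minimizing trajectories from $x$ to $k_0$ then share the initial velocity $-\nabla_A d(x)$. Near $x$ these trajectories are interior geodesics, since the backward flow of $-\nabla_A d$ stays in the region $\{d>d_+\}$ as long as $d>d_+$. This gives a continuous map $\Gamma\to\{d=d_++\epsilon\}$, $x\mapsto \phi(x)$, obtained by following the gradient flow $\dot y=-\nabla_A d(y)$. The flow is defined as long as the trajectory meets no singular point, and it coincides with the minimizers.

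The point to check is that the flow reaches the level $d_++\epsilon$ without meeting singularities. In fact the whole minimizing arc from $x$ consists of points with a unique optimal direction: an interior point of a minimizer is never singular, since it has a unique forward continuation in the interior. So the flow runs down to the obstacle, and the map is defined and continuous up to level $d_++\epsilon$ (continuity follows from the closedness of the set of minimizers). The same argument applied at every level $b\in(d_+,a]$ yields a homotopy $H(x,t)$ pushing $\Gamma$ into the sublevel $\{d\le b\}$.

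\textbf{Step 3: topological contradiction (expected main difficulty).} The obstacle, or at least the component of $\mathscr O$ relevant for separation, together with $k_0$, lies inside $\{d\le d_+\}$. I use the flow in reverse. Points of $\{d=b\}$ near the image of $\Gamma$ flow back up to $\Gamma$, so the image $\Gamma_b=H(\Gamma,\cdot)$ is an isotopic copy of $\Gamma$. Following the regular levels down, $\Gamma$ is a regular hypersurface bounding $B$, and the field $-\nabla_A d$ points into $B$ on $\Gamma$ because $d$ decreases toward $k_0$; I would check this orientation from Step 1. Consider $\max_{\overline B\cap X} d$. I claim this maximum is attained in the interior of $B$ or on $\Gamma$, and I try to show that $B$ contains a point with $d>a$. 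Points of $\overline B$ near $\Gamma$ have $d<a$, by the inward orientation. Then a maximizer $p$ of $d$ on $\overline B\cap X$ lies in $\mathrm{int}\,B$ with $d(p)\ge a>d_+$ or is on $\Gamma$. In the first case $p$ is an interior local maximum of $d$, and local maxima of distance functions away from $k_0$ are singular points, because the superdifferential at a maximum contains $0$, which violates the eikonal equation. The remaining case is $\sup_B d=a$ attained only on $\Gamma$; I must then exclude that $B\cap X$ is a region whose points all have $d<a$. If $k_0\notin B$, then $B\cap X$ minus the obstacles would have $-\nabla_A d$ pointing inward on its whole outer boundary while minimizers exit toward $k_0$ outside $B$, which is impossible. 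If $k_0\in B$, then since $a>d_+$ I claim $B\supset\mathscr O$ (otherwise a component argument shows again that $\Gamma$ cannot have inward gradient everywhere), and $\R^n\setminus \overline B$ is unbounded. Points just outside $\Gamma$ have $d>a$, and by coercivity $d$ attains a minimum on the compact sets $\{d\le a'\}\setminus B$; this forces the unbounded region to meet $\{d=a\}$ only along $\Gamma$. This is consistent, so no contradiction arises there, and the contradiction has to come from the obstacle.

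This is the step I expect to be hardest. The argument I would actually pursue is the following. Take $x\in\Gamma$ to be the point maximizing the direction-distance in a suitable sense, namely a point of $\Gamma$ whose minimizer touches $\partial\mathscr O$ tangentially at the ``shadow boundary''. Points just behind the obstacle, as seen from $k_0$, have two minimizers winding around opposite sides of $\mathscr O$. I would make this precise via a degree argument: the map $\Gamma\to S^{n-1}$ given by the last contact direction/wrapping must be surjective onto an obstacle silhouette, and continuity of $x\mapsto$ minimizer forces a point where the minimizer splits, which is a singularity.
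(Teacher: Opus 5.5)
Your proposal has a genuine gap: Step 3 never produces a contradiction. The maximum-principle discussion ends, as you note yourself, in a consistent picture. The closing ``degree argument'' is not formulated, and it rests on a false principle, namely that a point where minimizers split must be singular. With an obstacle, two minimizers may leave a point with the \emph{same} initial velocity and branch only after reaching $\partial\mathscr O$. By Proposition \ref{l:vi}, singularity means two minimizers with \emph{different} initial velocities, and the point $P$ in Section \ref{whale} is exactly a regular point with two minimizers. Relatedly, the continuity of $x\mapsto$ minimizer in Step 2 holds only up to the first contact with the obstacle (Proposition \ref{p:nuova}), not along whole minimizers.

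There is also a repairable flaw in Step 1. $\Sigma(d)$ is not closed in general: for a non-Euclidean metric, limits of singular points can be conjugate-type points where $d$ is differentiable. So you cannot conclude that $d$ is $C^1$ on a neighbourhood of $\Gamma$. What upper semicontinuity of $D^+d$ does give is continuity of $Dd$ along $\Gamma$. Clarke's implicit function theorem plus Whitney's extension theorem then show that $\Gamma$ is a compact $C^1$ hypersurface (Lemma \ref{l:ew}).

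The missing idea is to keep pushing the front along the unique minimizers until the first time $T$ at which some minimizer from $\Gamma$ touches $\mathscr O$, not just down to level $d_++\epsilon$. Note that $T\ge a-d_+>0$. The map $F(x,t)=y_x(t)$ is continuous and injective (Proposition \ref{p:nuova}), hence open on $\Gamma\times\,]0,T[$ by invariance of domain. Then split into two cases.

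If $T<a$, take $\bar y=y_{\bar x}(T)\in\partial\mathscr O$. Since $\bar y$ is an interior point of a minimizer, $D^*d(\bar y)=\{Dd(\bar y)\}$ with $Dd(\bar y)\neq0$. Using the extension of Theorem \ref{t:estensione}, the nonsmooth implicit function theorem and invariance of domain, one shows that near $\bar y$ the front $K_T=F(\Gamma,T)$ coincides with the level set of the extension and is a $C^1$ hypersurface lying in $X$. It therefore touches $\partial\mathscr O$ at $\bar y$ without crossing, so $Dd(\bar y)$ is normal to $\partial\mathscr O$. But by Theorem \ref{t:regmin}(iii), $\dot y_{\bar x}(T)=-A^{-1}(\bar y)Dd(\bar y)$ is tangent to $\partial\mathscr O$. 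Hence $\langle A^{-1}(\bar y)Dd(\bar y),Dd(\bar y)\rangle=0$, contradicting the eikonal equation.

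If $T=a$, the swept region $F(\Gamma\times\,]0,a[)$ is open with boundary $\Gamma\cup\{k_0\}$, hence a punctured neighbourhood of $k_0$ in $X$. Take a minimizer from a point $x_-$ that minimizes $d$ on $\partial\mathscr O$; it avoids $\mathscr O$ after time $0$. It must therefore meet some $y_x$ at a point where both are free geodesics with the common velocity $-A^{-1}Dd$. Since $d(x_-)\le d_+<a$, backward uniqueness for the geodesic ODE places $x_-$ at an interior time of $y_x$, which avoids $\mathscr O$ --- a contradiction.

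No winding or degree considerations are needed. The whole mechanism is the incompatibility between the tangential contact of minimizers with $\partial\mathscr O$ and their $A$-orthogonality to the fronts $K_t$.
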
 It is natural to wonder whether the boundary of the obstacle should also contain at least one singular point for the distance. In the case of the Euclidean distance and of a convex obstacle, Theorem 1.6 in \cite{ABC2} shows that the point on $\partial\mathscr O$ where the maximum in \eqref{dplus} is attained is singular for $d$. We show instead that this property may fail if the obstacle is not convex. More precisely, in Section \ref{whale} we give an example of an obstacle ${\mathscr O} \subset \R^2$ such that the associated distance from a suitable target $k_0$, with the Euclidean metric, is regular at every point of $\partial \mathscr{O}$. 
       
       Once the existence of singularities of $d$ in $X\setminus \{k_0\}$ is established, one may ask whether such singularities can be isolated. Our second main result gives a negative answer for all singular points outside the obstacle. \begin{theorem}\label{t:propagation} Let $d$ be defined by \eqref{d}, under the same assumptions as in Theorem \ref{t:existence}. Then, for every $x\in \Sigma (d)\cap (\R^n\setminus \mathscr{O})$, with $x\neq k_0$, there exist $\sigma >0$ and a Lipschitz continuous arc ${\bf x}:[0,\sigma [\longrightarrow \Sigma (d)$ such that ${\bf x}(0)=x$ and ${\bf x}(t)\neq x$, for every $t\in ]0,\sigma [$. \end{theorem}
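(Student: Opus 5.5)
The plan is to localize the problem to a small ball around $x$ that does not meet $\mathscr O$, where $d$ behaves like an unconstrained solution of the eikonal equation \eqref{eq:iconale}. Then I would apply the classical propagation-of-singularities theory for semiconcave solutions of Hamilton--Jacobi equations (as in \cite{CS}).

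\emph{Step 1: local regularity.} Since $x\in\R^n\setminus\mathscr O$ and $x\neq k_0$, I fix $r>0$ with $\overline{B_r(x)}\subset \R^n\setminus(\mathscr O\cup\{k_0\})$. On $B_r(x)$, $d$ is a viscosity solution of $H(y,Dd)=0$ with $H(y,p)=\langle A^{-1}(y)p,p\rangle^{1/2}-1$. This Hamiltonian is $C^2$ in $y$ and convex (indeed strictly convex on level sets) in $p$. I would check that $d$ is locally semiconcave on $B_r(x)$, with a linear modulus. The reason is that for $y$ near $x$, any minimizer from $y$ starts with a geodesic segment of the metric $A$ contained in $B_r(x)$. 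Hence $d$ is locally the infimum of $d(z)+\mathrm{dist}_A(y,z)$ over $z\in\partial B_{r/2}(x)$ reachable in this way. Each map $y\mapsto \mathrm{dist}_A(y,z)$ is smooth with uniform $C^2$ bounds for $y$ near $x$ and $z$ at distance about $r/2$, once $r$ is below the injectivity radius scale. An infimum of uniformly $C^2$ functions is semiconcave, and this yields the claim. I expect the regularity results of Section 2 to provide this; otherwise the argument above supplies it.

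\emph{Step 2: the singularity condition.} With semiconcavity in hand, singular points in the interior are exactly the points where $D^+d(x)$ is not a singleton. I then want the standard sufficient condition for propagation, namely $\partial D^+d(x)\setminus D^*d(x)\neq\emptyset$, or more directly the generalized-characteristics version. For the eikonal-type $H$ above, reachable gradients satisfy $H(x,p)=0$, i.e.\ they lie on the strictly convex ellipsoid $\{\langle A^{-1}(x)p,p\rangle=1\}$. Since $x$ is singular, $D^+d(x)=\co D^*d(x)$ contains at least two points of this ellipsoid. Any point of the open segment between them satisfies $H<0$, so $D^+d(x)$ contains points with $H(x,p)<0$. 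This strict inequality is precisely the hypothesis of the Albano--Cannarsa propagation theorem along generalized characteristics.

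\emph{Step 3: invoking the propagation theorem.} I would take the generalized characteristic $\dot{\bf x}(t)\in \co H_p({\bf x}(t),D^+d({\bf x}(t)))$ with ${\bf x}(0)=x$, selecting the minimal-energy element $p_0$ of $D^+d(x)$. This ${\bf x}$ is Lipschitz, stays in $\Sigma(d)$ on $[0,\sigma[$, and is injective near $0$ because $\dot{\bf x}(0^+)=H_p(x,p_0)\neq0$ (note $p_0\neq0$ when $H(x,p_0)<0$ with $H$ homogeneous-type, so $H_p\neq0$). The choice of $\sigma<r/\mathrm{Lip}$ keeps the arc inside $B_r(x)$, where Steps 1--2 apply. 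The main obstacle is Step 1: obtaining genuine semiconcavity of $d$ near $x$, uniformly, in the constrained setting. The worry is that minimizers from nearby points may have reached $x$'s neighborhood after bouncing along $\partial\mathscr O$. The reduction to $\inf_z\{d(z)+\mathrm{dist}_A(y,z)\}$ over a small sphere avoids this, because no regularity of $d$ on the sphere is needed.
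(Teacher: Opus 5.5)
Your Step 1 is fine: it is Theorem~\ref{t:rb}(i), and your sphere-infimum argument is a valid way to see it. Steps 2--3, however, contain a genuine gap.

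The property you extract is that $D^+d(x)=\co D^*d(x)$ contains some $p$ with $H(x,p)<0$. This holds at \emph{every} singular point of \emph{every} locally semiconcave viscosity solution of an eikonal equation such as \eqref{eq:iconale}, so it cannot by itself force a nontrivial singular arc. Concretely, take $u(y)=-|y-x|$. It is concave, solves $|Du|=1$ in the viscosity sense on all of $\R^n$, and has $\Sigma(u)=\{x\}$. Moreover $D^+u(x)=\overline{B_1(0)}$ contains points with $|p|<1$, yet no singular arc leaves $x$. Your argument uses only local information near $x$ and never uses the obstacle, so it cannot distinguish $d$ from this example.

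The example also shows where Step 3 fails. Your claim that the minimal-energy element $p_0$ is nonzero because $H(x,p_0)<0$ is false, since $H(x,0)=-1<0$. In fact $p_0=0$ exactly when $0\in\co D^*d(x)$, i.e.\ when the initial velocities of the minimizers from $x$ do not lie in an open half-space (for instance, three minimizers leaving $x$ at mutual angles $2\pi/3$). You give no argument excluding this. In that case the minimal-energy characteristic has $\dot{\bf x}(0^+)=0$ and may be stationary, as it is for $-|y-x|$. Note also that your $H$ is not differentiable at $p=0$.

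The missing ingredient is a global one that rules out the configuration of this example. There $D^*d(x)$ fills the whole ellipsoid $\{\langle A^{-1}(x)p,p\rangle=1\}$, so $\partial D^+d(x)=D^*d(x)$. The paper excludes this using the obstacle. Let $\zeta$ be a shortest path from $x$ to $\partial\mathscr O$. Then $-A(x)\dot\zeta(0)$ lies on the ellipsoid but not in $D^*d(x)$ (Lemma~\ref{le1}). The reason is that a minimizer to $k_0$ with that initial velocity would follow $\zeta$, by uniqueness for the geodesic ODE. It would then hit $\partial\mathscr O$ orthogonally in the metric $A$, contradicting the tangential contact in Theorem~\ref{t:regmin}(iii). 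For singular $x$ this also covers the case where that geodesic would reach $k_0$ first: a second minimizer from $x$, followed by the remainder of $\zeta$, would be a shortest path to $\partial\mathscr O$ with a corner at $k_0$.

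Since the ellipsoid is strictly convex, the convex hull of a proper closed subset of it with at least two points has boundary points strictly inside the ellipsoid. Hence $\partial D^+d(x)\setminus D^*d(x)\neq\emptyset$. Theorem~\ref{t:acprop} then gives a Lipschitz singular arc with $\dot{\bf x}^+(0)=-q_0\neq 0$, where $q_0$ is a nonzero normal to $D^+d(x)$ at a point of $\partial D^+d(x)\setminus D^*d(x)$. So ${\bf x}(t)\neq x$ for small $t>0$, with no need to decide whether $0\in D^+d(x)$. Replacing your Steps 2--3 by this argument repairs the proof.
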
 In the Euclidean case, a proof of Theorem \ref{t:propagation} is given in \cite{ABC2}. In addition to extending the result to the Riemannian case, we also give a counterexample to global propagation, by showing that the singular arc may be bounded and ``disappear'' when hitting the obstacle. 
       
       The paper is organized as follows. In Section 2 we recall some properties of semiconcave functions. In Section 3 we study distance-minimizing arcs in the presence of an obstacle, with particular emphasis on their relation with the classical and generalized gradients of $d$. Sections 4 and 6 are devoted to the proofs of Theorem \ref{t:existence} and Theorem \ref{t:propagation}, respectively. Section 5 contains the example of a connected obstacle $\mathscr O \subset \R^2$ for which the associated distance is differentiable at every point of $\partial \mathscr O$. Finally, Section 7 gives some finer properties of the singular set in the two-dimensional Euclidean case.

\section{Preliminaries} 
\setcounter{equation}{0}
\setcounter{theorem}{0}

It is well known that the distance function is more regular than merely Lipschitz continuous.  As shown in \cite{ABC2}, the appropriate regularity class for $d$ in the presence of an obstacle is the one of semiconcave functions with fractional modulus. Here and in the following, we write $[x,y]$ to denote the line segment between two points $x,y \in \R^n$.

\begin{definition}
Given $U\subset \R^n$, we say that a function $u:U\longrightarrow \R$ is {\em semiconcave with fractional modulus} of exponent  $\alpha \in ]0,1]$ 
if $u$ is locally Lipschitz continuous on $U$
and   
there exists  $C\in \R$ such that 
\begin{equation}\label{eq:sca}
\lambda u(x)+(1-\lambda) u(y)-u(\lambda x + (1-\lambda )y)\le C\lambda (1-\lambda) |x-y|^{1+\alpha},
\end{equation}
for any $x,y\in U$ such that $[x,y] \subset U $ 
 and for every $\lambda\in [0,1]$. 
\end{definition}
In the case $\alpha =1$ we say that $u$ is {\em semiconcave with linear modulus}. We denote by $SC^\alpha (U)$ the set of all the fractionally semiconcave functions of exponent $\alpha$ in $U$. Such a property can obviously be made local, in which case we refer to the space $SC_{loc}^\alpha (U)$. The set $U$ for the moment is arbitrary, but later in this section we will restrict ourselves to the case where $U$ is the closure of an open set.

The next statement describes the semiconcavity properties of the distance function with an obstacle. 

\begin{theorem} \label{t:rb}
Assume \eqref{eq:A} and let $\mathscr{O}\in \mathcal{O}$. Then the distance function $d$ satisfies \medskip \\
{\rm (i) (Interior regularity)}   $d\in SC_{loc}^1( X \setminus (\partial \mathscr{O}\cup \{ k_0\} ))$;  \\
{\rm (ii) (Boundary Regularity) } $d\in SC^{\frac 12}_{loc}(X\setminus  \{ k_0\})$.  
\end{theorem}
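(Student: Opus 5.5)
The plan is to prove both estimates by the classical ``perturb a minimizer'' argument: compare $d$ at the two endpoints of a segment with $d$ at an intermediate point, using one minimizing curve from the intermediate point to $k_0$. Minimizers exist by coercivity and Arzel\`a--Ascoli, since $X$ is closed and the Lagrangian is convex in the velocity. Fix a compact set $K\subset X\setminus\{k_0\}$ (for (i), $K$ is also disjoint from $\partial\mathscr O$). Take $x,y$ with $[x,y]\subset K$, set $z=\lambda x+(1-\lambda)y$, and let $\gamma$ be a minimizer from $z$ to $k_0$, parametrized by Euclidean arclength. For a small $\delta>0$, to be chosen later, we build admissible curves $\gamma_x,\gamma_y$ from $x$ and $y$ to $k_0$. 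Each one joins the initial point to $\gamma(\delta)$ and then follows $\gamma$. This gives
\[
\lambda d(x)+(1-\lambda)d(y)-d(z)\le \lambda L(\gamma_x|_{[0,\delta]})+(1-\lambda)L(\gamma_y|_{[0,\delta]})-L(\gamma|_{[0,\delta]}).
\]
The whole task is to bound this local defect.

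\textbf{(i) Interior case.} Fix $\rho>0$ such that the Euclidean $\rho$-neighbourhood of $K$ avoids $\partial\mathscr O\cup\{k_0\}$. An initial arc of $\gamma$ of length about $\rho$ then lies in the open set where no constraint is active. There $\gamma$ is a Riemannian geodesic, and hence $C^2$ with bounds uniform on $K$, because $A\in C^2$. Take $\delta\sim\rho$ and define $\gamma_x(t)=\gamma(t)+(1-t/\delta)(x-z)$ and $\gamma_y(t)=\gamma(t)+(1-t/\delta)(y-z)$. For $|x-y|$ small both curves stay in the same neighbourhood, so they are admissible. The map $v\mapsto L(\gamma+(1-t/\delta)v)$ is $C^2$ near $v=0$, with second derivative bounded uniformly on $K$. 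Since $\lambda(x-z)+(1-\lambda)(y-z)=0$, Taylor expansion gives a defect of at most $C\lambda(1-\lambda)|x-y|^2$. When $|x-y|$ is not small, the estimate follows from the Lipschitz bound after enlarging $C$. This yields linear semiconcavity on $K$, which is (i).

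\textbf{(ii) Boundary case.} Now $\gamma$ may start on, or immediately touch, $\partial\mathscr O$, and the linear perturbation above can enter the obstacle. The fix is to push the perturbed curves back into $X$. Near $\partial\mathscr O$ use a $C^2$ retraction, for instance the nearest-point projection $\pi$ onto $X$ in a tubular neighbourhood, or the outward normal push $p\mapsto p+c\,|v|^{?}\nu(p)$. Two facts are needed.
\begin{itemize}
\item A point of the form $\gamma(t)+s v$ with $\gamma(t)\in X$ lies at distance $O(|s v|^2)$ from $X$, because $\partial\mathscr O$ is $C^2$ and $\gamma$ is tangent to it wherever it touches.
\item Minimizers are $C^{1,1}$, or at least $C^1$ with a uniform modulus. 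This is the standard regularity of constrained geodesics, via the Euler--Lagrange inclusion with a measure multiplier bounded by curvature.
\end{itemize}
Projecting costs an additional length of order $|x-y|^2/\delta$. The linear part of the perturbation cancels in the combination as before, leaving a quadratic term $\lambda(1-\lambda)|x-y|^2/\delta$ plus the projection error. The key point is that the defect must also stay controlled at scale $\delta$: the curve $\gamma$ deviates from its chord by $O(\delta^2)$, and the straight reconnection through the obstacle region costs $\sim\delta^2$ in normal displacement. Balancing $|x-y|^2/\delta$ against $\delta$ at $\delta\sim|x-y|$ gives a bound of order $|x-y|^{3/2}$. More precisely, take $\delta=|x-y|^{1/2}$: then $|x-y|^2/\delta=|x-y|^{3/2}$, and the $O(\delta^2)$ normal correction, weighted by $|x-y|$, also contributes $|x-y|\delta=|x-y|^{3/2}$. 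The $\lambda(1-\lambda)$ factor is kept by running the construction from whichever endpoint is closer to $z$. This gives $SC^{1/2}$ near boundary points, and combined with (i) it gives (ii) on $X\setminus\{k_0\}$.

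\textbf{Main obstacle.} The hardest step is the boundary construction: producing admissible competitors whose length exceeds the linear prediction by only $O(|x-y|^{3/2})$. This needs quantitative control of how far the perturbed curve penetrates the obstacle, and so the $C^1$ or tangency regularity of constrained minimizers near contact points. If that regularity is unavailable, I would instead reparametrize and use the signed-distance retraction directly, bounding penetration by $\mathrm{dist}(\gamma(t)+sv,X)\le C|sv|\,\mathrm{dist}$-type estimates. The interior estimate (i) is routine.
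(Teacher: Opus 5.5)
\paragraph{Overall assessment.}
Your part (i) is the standard variational argument and is fine. The paper does not prove this theorem at all: it cites \cite{L}, \cite{A3} for (i) and \cite{ABC2} for (ii). A self-contained argument is therefore a different route, but your part (ii) does not hold together as written.

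\paragraph{The gap in (ii).}
\begin{itemize}
\item The first bulleted ``fact'' is false. The displacement $v=x-z$ is an arbitrary vector, and the tangency of $\dot\gamma$ to $\partial\mathscr O$ says nothing about it. If $\gamma(t)\in\partial\mathscr O$ and $v$ has a nonzero inward normal component, then $\mathrm{dist}(\gamma(t)+sv,X)$ is of order $s|v|$, not $|sv|^2$.
\item Had that fact been true, your own scheme with a fixed $\delta$ would give a defect $O(\lambda(1-\lambda)|x-y|^2)$. That is linear semiconcavity up to $\partial\mathscr O$, which the paper recalls is false by \cite{A6}.
\item The balancing paragraph is internally inconsistent. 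Balancing $|x-y|^2/\delta$ against $\delta$ gives $\delta\sim|x-y|$ and a bound of order $|x-y|$, not $|x-y|^{3/2}$.
\item An ``$O(\delta^2)$ normal correction weighted by $|x-y|$'' is $|x-y|\delta^2$, not $|x-y|\delta$. So the term $|x-y|\delta$, which is what makes $\delta=|x-y|^{1/2}$ work, is asserted rather than derived.
\item The retraction itself is left unspecified (the ``$c\,|v|^{?}\nu$'' push).
\end{itemize}

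\paragraph{How to repair it.}
The missing step is short. Write $\eta_x(t)=\gamma(t)+(1-t/\delta)(x-z)$. Its penetration into $\mathscr O$ is trivially at most $|x-z|$, because $\gamma(t)\in X$. What you need is a retraction $\pi$ onto $X$, equal to the identity on $X$ and defined on a thin neighbourhood of $X$, such that
\[
|D\pi(p)w|_{A(\pi(p))}\le \bigl(1+C\,\mathrm{dist}(p,X)\bigr)\,|w|_{A(p)} .
\]
For $A\equiv I$ this is Federer's result. Since $X$ has positive reach $r$, the nearest-point projection onto $X$ is $r/(r-h)$-Lipschitz on $\{\mathrm{dist}(\cdot,X)\le h\}$.

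For general $A$, the Euclidean nearest-point map is not $A$-nonexpansive. It is simplest to retract along the direction $A^{-1}\nu$ instead. The differential of that map on $\partial\mathscr O$ is the $A$-orthogonal projection onto the tangent space, and the estimate above follows using only $\nu\in C^1$.

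With such a $\pi$, the bounds follow in order:
\begin{itemize}
\item The projected curve satisfies $L(\pi\circ\eta_x)\le L(\eta_x)+C\delta|x-z|$, and likewise for $y$.
\item Since $|x-z|=(1-\lambda)|x-y|$ and $|y-z|=\lambda|x-y|$, the weighted projection error is at most $C\lambda(1-\lambda)\delta|x-y|$. The factor $\lambda(1-\lambda)$ comes for free, so there is no need to ``choose the closer endpoint''.
\item Add your Taylor term $C\lambda(1-\lambda)|x-y|^2/\delta$. This is legitimate because $|x-z|,|y-z|\le|x-y|\ll\delta$, so the velocities stay away from $0$.
\item Taking $\delta=|x-y|^{1/2}$ gives the bound $C\lambda(1-\lambda)|x-y|^{3/2}$.
\end{itemize}
In this form neither the $C^{1,1}$ regularity nor the tangency of minimizers is needed.
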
 
\begin{proof}
Part (i) is a well-known property of solutions of eikonal-type equation, see \cite{L} and \cite{A3}, while (ii) is proved in \cite{ABC2}.
\end{proof}

The regularity in Theorem \ref{t:rb}(ii) is optimal. Indeed, it is shown in \cite{A6} that, already in the Euclidean case, $d$ does not belong to $SC^\alpha_{loc}(X\setminus \{ k_0\})$ for any $\alpha \in ]1/2,1]$. 

We now recall some definitions of nonsmooth analysis.

\begin{definition}\label{dstar}(Generalized differentials) Let $u:U \subset \R^n \to \R$ be a locally Lipschitz continuous function. The set $D^*u(x)$ of {\em reachable gradients} of $u$ at $x \in U$ is defined as 
\begin{multline}\label{eq:dstar}
D^*u(x)=\{ p\in \R^n\ | \  \exists x_h\in \overset{\circ}{U},  x_h\to x, \  \exists Dd(x_h)\to p\} .
\end{multline}
The {\em (Fr\'echet) superdifferential} of $u$ at a point $x \in U$ is
defined as 
\begin{multline}\label{eq:superdiff}
D^+u(x)=\left\{ p\in \R^n\ | \ \limsup_{y \to x} \frac{u(y)-u(x)-\langle p, y-x \rangle}{|y-x|} \leq 0 \right\}.
\end{multline}
\end{definition}

By Rademacher's theorem, if $u$ is locally Lipschitz then $D^*u(x)$ is nonempty and compact at every point 
$x \in \mathring{U}$; it is also nonempty at points $x \in \partial U$, if $\partial U$ is sufficiently regular. If $u$ is semiconcave, $D^+u(x)$ is also nonempty and satisfies the following properties, see \cite{CS}.

\begin{theorem} \label{t:generdiff}
Let $u \in SC^\alpha_{loc}(U)$ for some $\alpha \in \,]0,1]$ and $U \subset \R^n$. Then, for any $x \in \mathring{U}$ we have 
\begin{enumerate} 
\item
 $D^+u(x)$ is nonempty and coincides with the convex hull of $D^*u(x)$. \\
\item If $x_k \to x$ and $p_k \to p$, with $p_k \in D^+u(x_k)$, then $p \in D^+u(x)$. \\
\item $u$ is differentiable at $x$ if and only if $D^+u(x)$ is a singleton. \\
\item for every compact set $K\subset  U$ containing $x$, for every $y\in K$ such that $[x,y]\subset K$, and every $p\in D^+d(x)$, we have that   
\begin{equation}\label{p}
u(y)\le u(x)+\langle p,y-x\rangle +C |y-x|^{1+\alpha}, 
\end{equation}
for a suitable constant $C$ depending on $K$. 
\end{enumerate}
 \end{theorem}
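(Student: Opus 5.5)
The plan is to reduce everything to the basic semiconcavity inequality \eqref{eq:sca} applied on small balls around an interior point $x\in\mathring U$, with the constant $C$ of some compact neighbourhood $K$ of $x$. Fix $r>0$ with $\overline{B}(x,2r)\subset K\subset U$. I will prove (4) first, since (1)--(3) follow from it.

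\textbf{Step 1: property (4) for differentiable points.} Let $y$ be a point where $u$ is differentiable, and let $z$ be such that $[y,z]\subset K$. Apply \eqref{eq:sca} to the pair $z,y$ with weight $\lambda$ on $z$, divide by $\lambda$ and let $\lambda\to0^+$. Since the derivative of $u$ along $z-y$ at $y$ equals $\langle Du(y),z-y\rangle$, this gives
\[
u(z)-u(y)-\langle Du(y),z-y\rangle\le C|z-y|^{1+\alpha}.
\]

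\textbf{Step 2: extension to $D^*u$, then to $D^+u$.} Take $p\in D^*u(x)$, so that $Du(x_h)\to p$ along a sequence $x_h\to x$. Pass to the limit in the Step 1 inequality written at $y=x_h$, using continuity of $u$. This yields \eqref{p} for $p\in D^*u(x)$ and $z$ near $x$. The inequality \eqref{p} is stable under convex combinations in $p$, so it holds on $\co D^*u(x)$.

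\textbf{Step 3: $D^+u(x)\subset \co D^*u(x)$ (the main obstacle).} This is the converse inclusion, and I expect it to be the hardest step. Suppose $p\in D^+u(x)$ but $p\notin \co D^*u(x)$, which is compact and nonempty by Rademacher. Separate: choose a unit vector $\theta$ and $\delta>0$ with
\[
\langle q,\theta\rangle \le \langle p,\theta\rangle-\delta \quad\text{for all } q\in D^*u(x).
\]
By upper semicontinuity of $D^*u$ (a limit of reachable gradients is reachable) and Fubini, for a.e. point on a small segment starting near $x$ in direction $\theta$, the gradient $Du$ exists and satisfies $\langle Du,\theta\rangle\le\langle p,\theta\rangle-\delta/2$. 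Integrating along such segments and letting the starting points tend to $x$ gives
\[
u(x+t\theta)\le u(x)+t\bigl(\langle p,\theta\rangle-\delta/2\bigr).
\]
This alone does not contradict $p\in D^+u(x)$, since $D^+$ only gives an upper bound. Here semiconcavity must enter, through one-sided directional derivatives. The function $t\mapsto u(x+t\theta)-Ct^{1+\alpha}$-type corrections are "almost concave", so $\partial_\theta u(x)$ exists. From Step 2 it equals $\min_{q\in D^*u(x)}\langle q,\theta\rangle$, approximately. One also has
\[
\partial_\theta u(x)\ge \langle p',\theta\rangle \quad\text{for } p'\in D^+u(x)\text{ in the opposite direction}.
\]
Running the same argument in direction $-\theta$ and using the Fr\'echet superdifferential lower bound
\[
u(x-t\theta)\ge u(x)-t\langle p,\theta\rangle+o(t)
\]
fails to close directly. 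So what I would actually try first is the standard route: prove
\[
\partial_\theta u(x)=\min_{q\in D^*u(x)}\langle q,\theta\rangle,
\]
where existence of $\partial_\theta u$ comes from semiconcavity and $\ge$ comes from the integration argument with the semicontinuity. Then for $p\in D^+u(x)$ the definition gives
\[
\partial_\theta u(x)\le\langle p,\theta\rangle.
\]
Testing $-\theta$ is also needed, and I expect that careful point to be where the proof is most delicate.

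\textbf{Step 4: deduce (1)--(3).} Property (1) follows from Steps 2 and 3, with nonemptiness from Rademacher. For (2), write \eqref{p} at $x_k$ with $p_k$; the constant $C$ is uniform on $K$, so passing to the limit gives \eqref{p} at $x$ with $p$, which implies $p\in D^+u(x)$. For (3), if $u$ is differentiable then $D^+u(x)=\{Du(x)\}$ by a direct check. Conversely, if $D^+u(x)=\{p\}$, then \eqref{p} gives the upper Taylor bound. For the lower bound, argue by contradiction with sequences: suppose
\[
u(y_k)\le u(x)+\langle p,y_k-x\rangle-\epsilon|y_k-x|.
\]
Combine this with (1) and the directional-derivative formula of Step 3 to get
\[
\partial_\theta u(x)=\langle p,\theta\rangle\quad\text{for all }\theta,
\]
uniformly by a compactness and Lipschitz argument, which is the required contradiction.
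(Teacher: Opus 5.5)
The paper does not prove this theorem; it quotes it from \cite{CS}. Measured against the standard argument there, your proposal has a genuine gap at Step 3. You never establish the inclusion $D^+u(x)\subset\co D^*u(x)$, which is the substance of (1); you say yourself the argument ``fails to close'' and defer to a ``delicate point''.

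The failure is a sign error. Membership $p\in D^+u(x)$ gives an \emph{upper} bound $u(x+h)\le u(x)+\langle p,h\rangle+o(|h|)$, not the lower bound you wrote for $u(x-t\theta)$. So the integration must produce a \emph{lower} bound. With your separation $\langle q,\theta\rangle\le\langle p,\theta\rangle-\delta$ on $D^*u(x)$ you get $\langle Du,\theta\rangle\le\langle p,\theta\rangle-\delta/2$ a.e.\ near $x$. Integrate along $-\theta$ instead of $\theta$: the same Fubini/continuity argument gives $u(x-t\theta)\ge u(x)-t\langle p,\theta\rangle+t\delta/2$. But $p\in D^+u(x)$ gives $u(x-t\theta)\le u(x)-t\langle p,\theta\rangle+o(t)$, a contradiction.

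Equivalently, your ``standard route'' closes at once, without needing existence of $\partial_\theta u(x)$. Integration gives $\liminf_{t\to0^+}t^{-1}(u(x+t\theta)-u(x))\ge\min_{q\in D^*u(x)}\langle q,\theta\rangle$, and the definition of $D^+u(x)$ gives $\limsup\le\langle p,\theta\rangle$. Since this holds for every unit $\theta$, including $-\theta$, we get $\min_{q\in\co D^*u(x)}\langle q,\theta\rangle\le\langle p,\theta\rangle$ for all $\theta$, which by separation means $p\in\co D^*u(x)$. This inclusion uses only local Lipschitz continuity: it is the inclusion of the Fr\'echet superdifferential in Clarke's gradient. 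Semiconcavity is needed for the reverse inclusion and for (4), not here.

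There is also a structural problem. In your ordering, (4) for arbitrary $p\in D^+u(x)$, and hence (2), depend on Step 3. Moreover, your limit in Step 2 gives (4) only for $y$ in a small convex neighbourhood of $x$, since you need $[x_h,y]\subset K$. The statement asks for every $y\in K$ with $[x,y]\subset K$, and $U$ need not be open. Both issues disappear if you run Step 1 at $x$ itself. For $p\in D^+u(x)$ and $[x,y]\subset K$, \eqref{eq:sca} gives
\[
u(y)-u(x)\le \frac{u(x+\lambda(y-x))-u(x)}{\lambda}+C(1-\lambda)|y-x|^{1+\alpha},
\]
and the $\limsup$ of the quotient as $\lambda\to0^+$ is at most $\langle p,y-x\rangle$. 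This is the standard order:
\begin{enumerate}
\item First (4), as above.
\item Then (2), by your limiting argument.
\item Then $D^*u(x)\subset D^+u(x)$ from (2), since $Du(x_h)\in D^+u(x_h)$; hence $\co D^*u(x)\subset D^+u(x)$ by convexity of $D^+u(x)$.
\item Finally (3) directly. If $D^+u(x)=\{p\}$, apply (4) at a nearby point $y$ with $p_y\in D^+u(y)$ and endpoint $x$. This gives $u(y)\ge u(x)+\langle p,y-x\rangle-|p_y-p|\,|y-x|-C|y-x|^{1+\alpha}$. Since $p_y\to p$ by (2) and the boundedness of $D^+u$ near $x$, this lower bound together with (4) at $x$ yields differentiability.
\end{enumerate}
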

 
If $x \in \partial U$, the superdifferential $D^+u(x)$ is not well behaved and some of the above properties may fail. In Remark \ref{r:int} below we give partial generalization of \eqref{p} to boundary points.
 
We now recall the result about the extension of a semiconcave function
with fractional modulus, obtained in \cite{ABC1}, and which will be used several times in the sequel. 
%

 \begin{theorem}\label{t:estensione}
Let $U\subset \R^n$ be the closure of an open set, and let $u\in SC_{loc}^\alpha (U)$. Then, for every $x\in \partial U$, there exist $\delta>0$ and a function $E(u)\in SC^\alpha (B_\delta (x))$ such that 
\begin{enumerate} 
\item $E(u)(y)=u(y)$ for every $y\in B_\delta (x)\cap U$;
\item $D^*E(u)(y)=D^*u(y)$ for every $y\in B_\delta (x)\cap \partial U$.
\end{enumerate}
\end{theorem}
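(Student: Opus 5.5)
The plan is to build the extension as an infimum of ``upper supporting'' functions of $u$. Fix $x\in\partial U$, a small $\delta>0$ and a constant $C$ to be chosen. For $y\in B_\delta(x)$ set
\[
E(u)(y)=\inf\Big\{\,u(z)+\langle p,y-z\rangle + C|y-z|^{1+\alpha}\ :\ z\in U\cap \overline{B_{2\delta}(x)},\ p\in D^*u(z)\Big\}.
\]
Here $D^*u(z)$ is nonempty and uniformly bounded, because $u$ is Lipschitz near $x$. This is the natural analogue of the representation of a semiconcave function as an infimum of paraboloids, with the quadratic term replaced by $|\cdot|^{1+\alpha}$.

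\emph{Step 1: $E(u)$ is semiconcave on the ball.} Each function $y\mapsto u(z)+\langle p,y-z\rangle+C|y-z|^{1+\alpha}$ belongs to $SC^\alpha(B_\delta(x))$ with a constant depending only on $C$ and $\alpha$, since $|\cdot|^{1+\alpha}$ has an $\alpha$-H\"older gradient. An infimum of a family of uniformly Lipschitz functions with a common semiconcavity constant satisfies \eqref{eq:sca} with that same constant. Since the family is uniformly Lipschitz and bounded, $E(u)$ is finite and Lipschitz.

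\emph{Step 2: $E(u)=u$ on $U\cap B_\delta(x)$.} Taking $z=y$ gives $E(u)\le u$, because $D^*u(y)\neq\emptyset$. For the reverse inequality we need the one-sided estimate
\[
u(y)\le u(z)+\langle p,y-z\rangle+C|y-z|^{1+\alpha},\qquad y,z\in U\cap B_{2\delta}(x),\ p\in D^*u(z).
\]
For interior $z$ with $[z,y]\subset U$, this is \eqref{p}. For boundary $z$ it follows by approximating $p$ with gradients $Du(z_h)$ at interior points $z_h\to z$ and passing to the limit.

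The delicate case is when the segment $[z,y]$ leaves $U$. Here I would use that, near $x$, $U$ is locally the epigraph of a regular (at least Lipschitz) function. Then I would join $y$ to $z$ by a short polygonal path inside $U$, pushed slightly into the interior along the transversal direction. The estimate is applied on each piece, and the errors are summed using the Lipschitz bound and the H\"older modulus; shrinking $\delta$ absorbs the errors into $C$. This geometric step, producing the supporting inequality across a nonconvex boundary while keeping the exponent $1+\alpha$, is the one I expect to be the main obstacle. If the polygonal argument does not close, I would instead restrict $z$ in the infimum to points from which $y$ is ``visible'' within a fixed cone, and check that this restricted infimum is still attained at $z=y$ for $y\in U$.

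\emph{Step 3: reachable gradients at boundary points.} Fix $y\in\partial U\cap B_\delta(x)$. The inclusion $D^*u(y)\subset D^*E(u)(y)$ is immediate, since interior points of $U$ are interior points of the ball and $E(u)=u$ there.

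For the converse, let $y_h\to y$ be points where $E(u)$ is differentiable with $DE(u)(y_h)\to q$; only $y_h\notin U$ needs attention. By compactness the infimum at $y_h$ is attained at some pair $(z_h,p_h)$. The supporting function touches $E(u)$ from above at $y_h$, so differentiability forces
\[
DE(u)(y_h)=p_h+C(1+\alpha)|y_h-z_h|^{\alpha-1}(y_h-z_h).
\]
Next we show $z_h\to y$. Comparing with the competitor $z=\pi(y_h)\in U$, the nearest boundary point, and using the inequality of Step 2 gives $C|y_h-z_h|^{1+\alpha}\lesssim |y_h-\pi(y_h)|\to0$, possibly after a refinement of the choice of $C$. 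Hence the correction term is $O(|y_h-z_h|^\alpha)\to0$, and $q=\lim p_h$ with $p_h\in D^*u(z_h)$ and $z_h\to y$. A diagonal argument then shows that $q\in D^*u(y)$, which proves $D^*E(u)(y)=D^*u(y)$.
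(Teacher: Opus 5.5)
The paper does not prove this statement; it is quoted from \cite{ABC1}, so the question is whether your argument closes on its own. Steps 1 and 3 are fine once the global one-sided estimate of Step 2 is available. In Step 3 you need $C$ strictly larger than the constant in that estimate, which you anticipate.

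The gap is Step 2, which carries the whole content of the theorem. It cannot be closed under the hypothesis you propose: for a Lipschitz boundary both the estimate and the theorem fail. In particular, the statement as literally quoted, for an arbitrary closure of an open set, needs a regularity hypothesis on $\partial U$; in the paper it is only applied near $C^2$ boundaries. Here is a counterexample. Take $U=\{x_1\le 0\}\cup\{x_2\ge 0\}\subset\R^2$, which is a Lipschitz epigraph in rotated coordinates. Define $u(x)=-x_2$ for $x_2\ge 0$ and $u(x)=x_1x_2/|x|$ for $x_1,x_2\le 0$. Then $u=r\psi(\theta)$ with $\psi=-\sin\theta$ on $[0,\pi]$ and $\psi=\frac12\sin 2\theta$ on $[\pi,\frac{3\pi}{2}]$. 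The function $\psi$ is $C^1$, satisfies $\psi''+\psi\le 0$, and $\psi(\theta)+\psi(\theta+\pi)\le 0$ for $\theta\in[0,\frac{\pi}{2}]$. Hence $u$ is Lipschitz and concave along every segment contained in $U$. At $b=(0,-t)$ the gradients of $u$ from inside $U$ tend to $(-1,0)$. So any $v\in SC^\alpha(B_\delta(0))$ extending $u$ satisfies $v(t,-t)\le -t+Ct^{1+\alpha}$. On the other hand, the midpoint inequality on the segment from $(2t,0)$ to $(0,-2t)$, where $u$ vanishes, gives $v(t,-t)\ge -C't^{1+\alpha}$. This is impossible for small $t$: the detour around a reentrant corner costs first order.

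With a $C^{1,1}$ boundary (even $C^{1,\alpha}$) the estimate does hold, but not by summing errors along a polygonal path as you describe. On a piece starting at an intermediate vertex $m'$ you must use some $q\in D^*u(m')$ instead of $p$. The discrepancy $\langle q-p,y-m'\rangle$ is a priori of order $|y-z|$, and there is no H\"older continuity of $D^*u$ to absorb it. The missing idea is a two-segment detour through $m'=\frac{z+y}{2}+w$. Here $w$ is a push into $U$ along the graph direction with $|w|\le K|y-z|^{1+\alpha}$; the boundary regularity makes this enough for $[z,m'],[m',y]\subset U$. Apply the estimate along $[z,m']$ in both directions to get $\langle q-p,m'-z\rangle\le 2C|m'-z|^{1+\alpha}$. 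Since $y-m'=(m'-z)-2w$, this yields
\[
u(y)\le u(z)+\langle p,y-z\rangle+4C|y-z|^{1+\alpha}+4L|w| ,
\]
where $L$ bounds $|p|$ and $|q|$. This is the required inequality. The push also makes $[z,m']$ enter $\mathring U$ transversally. That is exactly what your approximation of $p\in D^*u(z)$, $z\in\partial U$, by $Du(z_h)$ needs, since it requires $[z_h,m']\subset U$.

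Your fallback of restricting $z$ to points that see $y$ does not work. The admissible index set then depends on $y$, so $E(u)$ is no longer the infimum of a fixed family, and Step 1 is lost.
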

In particular, for our applications, we will take $u=d$, $\alpha =1/2$ and $U=X \cap \overline{B}_r(k_0)$, for a suitable positive $r$ less than the (Euclidean) distance of $k_0$ from $\partial\mathscr {O}$.

 \begin{remark}\label{r:int} 
 By applying the above result, we see that
  inequality \eqref{p} also holds when $x \in \partial U \cap K$ and $p \in D^*u(x)$.
   \end{remark}

We now introduce the singular set for a semiconcave function in a domain with boundary.

\begin{definition} \label{insiemes} (Singular set) Let $u \in SC^\alpha_{loc}(U)$ for some $\alpha \in \,]0,1]$ and $U \subset \R^n$. The {\em singular set} of $u$ is defined  as follows
$$
\Sigma (u)=\{ x\in U \ :
D^*u(x) \text{ has at least two elements}\}. 
$$
\end{definition}

Usually, e.g. \cite{CS}, the singular set of a semiconcave function is defined as the set of points of nondifferentiability. The relation with the above definition is explained in the next result.

\begin{corollary}\label{r:ext}
Let $u\in SC_{loc}^\alpha (U)$, with $U$ the closure of an open set.
\begin{enumerate} 
\item For $ x \in\mathring{U}$, we have that $ x \in \Sigma(u)$ if and only if $u$ is not differentiable at $x$.
\item For $ x \in \partial U$, we have that $ x \in \Sigma(u)$ if and only if each local semiconcave extension of $u$ is not differentiable at $x$. As a consequence, if $ x \notin \Sigma(u)$ then $u$ is differentiable at $x$.
\end{enumerate}
\end{corollary}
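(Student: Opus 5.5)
Part (1) I will read off Theorem \ref{t:generdiff}. Fix $x\in\mathring{U}$. Recall that $D^+u(x)=\co D^*u(x)$, and that $D^*u(x)$ is nonempty and compact. If $D^*u(x)$ has at least two elements, its convex hull is not a singleton. Then Theorem \ref{t:generdiff}(3) says $u$ is not differentiable at $x$. Conversely, suppose $D^*u(x)=\{p\}$. Then $D^+u(x)=\{p\}$, and the same item (3) gives differentiability at $x$. So $x\in\Sigma(u)$ exactly when $u$ fails to be differentiable at $x$.

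For part (2), fix $x\in\partial U$. Let $E(u)$ be any local semiconcave extension of $u$, as in Theorem \ref{t:estensione}, defined on a ball $B_\delta(x)$. We need that it satisfies $D^*E(u)(x)=D^*u(x)$. Since $x$ is an interior point of $B_\delta(x)$, part (1) applies to $E(u)$. Hence $E(u)$ is not differentiable at $x$ iff $D^*E(u)(x)$ has at least two elements, iff $D^*u(x)$ does, iff $x\in\Sigma(u)$. This gives the equivalence, with "each extension" read as every extension enjoying property (2) of Theorem \ref{t:estensione}. Such extensions exist by that theorem, so the statement is not vacuous.

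For the final consequence, assume $x\notin\Sigma(u)$. Then $D^*u(x)=\{p\}$ and $E(u)$ is differentiable at $x$ with gradient $p$. Since $u=E(u)$ on $B_\delta(x)\cap U$, we get
\[
u(y)=u(x)+\langle p,y-x\rangle+o(|y-x|)\qquad\text{for } y\in U,\ y\to x.
\]
This is differentiability of $u$ at $x$ relative to $U$.

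The only delicate point is the interpretation of "each local semiconcave extension". An arbitrary semiconcave extension might not preserve reachable gradients, since $D^*$ could acquire extra limits from outside $U$. I would therefore state explicitly that the extensions considered are those of Theorem \ref{t:estensione}, for which (2) holds at $x$. Alternatively, one can argue directly that any semiconcave extension $v$ has $D^*v(x)\supseteq D^*u(x)$, using approximating sequences from $\mathring U$. Then, if $v$ is differentiable at $x$, semiconcavity forces $D^*v(x)$ to be a singleton, and hence $D^*u(x)$ is a singleton too. This closes the "if" direction for every extension, while the "only if" direction uses the specific extension from Theorem \ref{t:estensione}.
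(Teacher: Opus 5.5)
Your proof is correct, and its final paragraph is the paper's argument. For an arbitrary extension $\tilde u$, the inclusion $D^*u(x)\subseteq D^*\tilde u(x)$ gives one direction, and the particular extension $E(u)$ of Theorem \ref{t:estensione} gives the other, so restricting to extensions with property (2) in your main argument is unnecessary. Note, however, that your labels are swapped: the inclusion argument proves the ``only if'' direction ($x\in\Sigma(u)\Rightarrow$ every extension is non-differentiable at $x$), and $E(u)$ is what proves the ``if'' direction.
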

\begin{proof} Part (1) follows from (1) and (3) of Theorem \ref{t:generdiff}. Suppose now $x \in \partial U$. If $D^*u(x)$ contains $p_0,p_1$, with $p_0\not= p_1$, then, for every $\tilde{u}$ local extension of $u$, we have that $p_0,p_1$ belong to $D^*\tilde{u}(x)$ as well, by the definition of reachable gradients. Therefore $\tilde u$ is not differentiable at $x$ by part (1) of the statement.  Vice versa, if $x\notin \Sigma (u)$, then  Theorem~\ref{t:estensione} provides a local extension $\tilde u= E(u)$ such that $D^*\tilde u(x)$ is a singleton, hence it is differentiable at $x$. If the extension is differentiable at $x$, the original function $u$ also is by definition.
\end{proof} 

Simple examples show that the equivalence in (1) may fail at boundary points. For instance, let $U=\R^2 \setminus D$, where $D \subset \R^2$ is the open ball of radius $1$ centered at $(-1,0)$, and let $u(x,y)=-|x|$.  The function $u$ is concave, hence also semiconcave with a linear modulus. We have $D^*u(0,0)=\{(-1,0),(1,0)\}$, and so the point $(0,0)$ is singular. On the other hand, it is easy to check that
$$
\lim_{U \ni (x,y) \to (0,0)} \frac{u(x,y)-u(0,0)+x}{\sqrt{x^2+y^2}}=0,
$$
hence $u$ is differentiable at $(0,0)$ with $Du=(-1,0)$. On the other hand, any semiconcave extension of $u$ in a neighbourhood of $(0,0)$ would no longer be differentiable.

We conclude this section by recalling a general criterion for the ``propagation'' of singularities for semiconcave functions, see \cite{AC1}. 
\begin{theorem}\label{t:acprop}
Let  $U \subset \R^n$ be an open set, let  $u\in SC^1_{loc} (U)$, and let $x_0\in  \Sigma (u)$ be such that 
\begin{equation}\label{eq:h}
\partial D^+u(x_0)\setminus D^*u(x_0)\not=\emptyset .
\end{equation}
Then, for any $p_0 \in \partial D^+u(x_0)\setminus D^*u(x_0)$ and
 any $q_0\neq 0$ in the normal cone to $D^+u(x_0)$ at $p_0$, 
there is a Lipschitz continuous map 
$$
[0,\sigma [\ni s\mapsto {\bf x}(s)\in \Sigma (u)
$$
for a suitable $\sigma>0$, 
such that 
$$
{\bf x}(s)\not=x_0={\bf x}(0), \text{ for every  }s\in ]0,\sigma [, 
$$
and such that ${\bf \dot x}^+(0)=-q_0$, where ${\bf \dot x}^+(0)$ denotes the right derivative.
\end{theorem}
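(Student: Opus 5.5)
The plan is to construct the singular arc as a curve of maximizers of a penalized functional, following the classical approach to propagation of singularities for semiconcave functions. Throughout, the normal cone to the convex set $D^+u(x_0)$ at $p_0$ is taken to be
$$N(p_0)=\{q\in\R^n : \langle q,p-p_0\rangle\le 0 \text{ for all } p\in D^+u(x_0)\}.$$
Set $w:=-q_0$ and fix $R>0$ with $\overline B_R(x_0)\subset U$. Let $C$ be a semiconcavity constant of $u$ on $\overline B_R(x_0)$. For small $s>0$, let ${\bf x}(s)$ be a maximizer over $\overline B_R(x_0)$ of
$$\phi_s(y)=u(y)-\langle p_0,y-x_0\rangle-\frac{|y-x_0-s w|^2}{2s}.$$
For $s<1/(2C)$ the function $\phi_s$ is strictly concave, since $u(y)-C|y|^2$ is concave. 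Hence the maximizer is unique. Comparing $\phi_s({\bf x}(s))\ge\phi_s(x_0)$ and using the Lipschitz bound on $u$ gives $|{\bf x}(s)-x_0|=O(s)$. So ${\bf x}(s)$ lies in the interior of the ball, and the first-order condition yields
$$p_s:=p_0+\frac{{\bf x}(s)-x_0}{s}-q_0\in D^+u({\bf x}(s)).$$

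\emph{Step 1: ${\bf x}(s)\ne x_0$.} If ${\bf x}(s)=x_0$, then $p_0-q_0\in D^+u(x_0)$. The normal cone inequality then gives $\langle q_0,-q_0\rangle\le 0$, which is no contradiction, so this must be handled differently. Instead I use the quantitative relation $\langle q_0,p-p_0\rangle\le 0$ together with the monotonicity estimate in Step 2. That estimate gives $|p_s-p_0|^2+\langle q_0,p_s-p_0\rangle\le O(s)$. If ${\bf x}(s)=x_0$, then $p_s=p_0-q_0$ exactly, and the left side equals $|q_0|^2-|q_0|^2=0$. This is still not a contradiction, so I would fix the sign of the penalty shift more carefully.

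The clean route is to choose the shift so that the stationarity condition at $x_0$ would force $p_0+q_0\in D^+u(x_0)$. That is impossible, because $\langle q_0,(p_0+q_0)-p_0\rangle=|q_0|^2>0$. Concretely, I maximize
$$\phi_s(y)=u(y)-\langle p_0,y-x_0\rangle-\frac{|y-x_0+s q_0|^2}{2s}.$$
Its first-order condition is $p_s=p_0+\dfrac{{\bf x}(s)-x_0}{s}+q_0\in D^+u({\bf x}(s))$. If ${\bf x}(s)=x_0$, then $p_0+q_0\in D^+u(x_0)$, which contradicts $q_0\in N(p_0)\setminus\{0\}$. Hence ${\bf x}(s)\ne x_0$ for every $s\in\,]0,\sigma[$.

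\emph{Step 2: $p_s\to p_0$ and $\dot{\bf x}^+(0)=-q_0$.} Semiconcavity gives the monotonicity inequality
$$\langle p-p',x-x'\rangle\le 2C|x-x'|^2\quad\text{for } p\in D^+u(x),\ p'\in D^+u(x').$$
Apply it with $p=p_s$, $x={\bf x}(s)$, $p'=p_0$, $x'=x_0$, and substitute ${\bf x}(s)-x_0=s(p_s-p_0-q_0)$. Dividing by $s$ gives
$$|p_s-p_0|^2-\langle q_0,p_s-p_0\rangle\le 2Cs\,|p_s-p_0-q_0|^2=O(s).$$
Along any sequence $s_k\to0$ with $p_{s_k}\to\bar p$, Theorem \ref{t:generdiff}(2) gives $\bar p\in D^+u(x_0)$. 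Therefore $|\bar p-p_0|^2\le\langle q_0,\bar p-p_0\rangle\le 0$, so $\bar p=p_0$. It follows that $p_s\to p_0$ and $\dfrac{{\bf x}(s)-x_0}{s}=p_s-p_0-q_0\to-q_0$, which is the required right derivative. Lipschitz continuity of $s\mapsto{\bf x}(s)$ comes from the same monotonicity inequality applied to two parameters $s,t$. The bounds $|{\bf x}(s)-x_0|=O(s)$ and $|p_s|\le\mathrm{Lip}(u)$ control the cross terms, so that $|{\bf x}(s)-{\bf x}(t)|\le L|s-t|$ for $s,t$ small. This is the routine but most computational step.

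\emph{Step 3: singularity.} Suppose there were $s_k\to0$ with ${\bf x}(s_k)\notin\Sigma(u)$. Then $u$ is differentiable there, so $Du({\bf x}(s_k))=p_{s_k}\to p_0$. Since ${\bf x}(s_k)\to x_0$, this gives $p_0\in D^*u(x_0)$, contrary to the hypothesis $p_0\notin D^*u(x_0)$. Hence ${\bf x}(s)\in\Sigma(u)$ for all $s\in\,]0,\sigma[$ once $\sigma$ is small. The main obstacle is the one already met in Step 1: choosing the sign of the shift in the penalty so that stationarity at $x_0$ forces $p_0+q_0$ (not $p_0-q_0$) into $D^+u(x_0)$. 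This choice makes both the non-stationarity argument and the identification $\dot{\bf x}^+(0)=-q_0$ work with the stated convention for $N(p_0)$.
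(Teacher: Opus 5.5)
The paper does not prove this statement itself but quotes it from \cite{AC1}. Your argument is the standard penalization proof from that source (see also \cite{CS}), and it is correct: you maximize $u(y)-\langle p_0+q_0,y-x_0\rangle-|y-x_0|^2/(2s)$, which is your $\phi_s$ up to an additive constant, then use the first-order condition, the monotonicity and upper semicontinuity of $D^+u$ to get $p_s\to p_0$, and finally the hypothesis $p_0\notin D^*u(x_0)$. One note: your initial functional with $w=-q_0$ is literally the same as the final one. The apparent obstruction in Step 1 came from a sign slip in its first-order condition (the gradient of the penalty at ${\bf x}(s)$ is $({\bf x}(s)-x_0)/s+q_0$), so that false start can simply be deleted.
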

\section{Properties of the length minimizers}
\setcounter{equation}{0}
\setcounter{theorem}{0}

A curve $\gamma$ which achieves the infimum in \eqref{def.distance} for given $x,y \in X$ is called a {\em length minimizer}. 
We recall the following result about the existence and regularity of minimizers. In the following, we denote by $L(x,q)$ the Lagrangian associated with our metric
$$
L(x,q):=\langle A(x)q,q \rangle, \quad x \in \R^n, q \in R^n,
$$
and by $L_x$ and $L_q$ the partial derivatives with respect to $x,q$. 
\begin{theorem}\label{t:regmin}
Assume \eqref{eq:A} and let $\mathscr{O}\in \mathcal{O}$. Then, for every $x,y \in X$, there exists a $C^1$ curve $\gamma :[0,1]\rightarrow X$ which is a length minimizer. Moreover, any such minimizer satisfies \\
{\rm (i)}  $\dot{\gamma}$ is Lipschitz continuous on $[0,1]$, with uniform Lipschitz constant on all minimizers between endpoints $x,y$ varying in compact sets of $X$.   \\
{\rm (ii)} At any $t\in [0,1]$ such that  $\gamma (t)\notin \partial \mathscr{O}$ the curve $\gamma$ is twice differentiable and satisfies the geodesic ODE
\begin{equation}\label{geodesic}
\frac{d}{dt} L_q(\gamma(t),\dot{\gamma}(t))=L_x (\gamma(t),\dot{\gamma}(t)).
\end{equation}
{\rm (iii)} At any $t\in ]0,1[$ such that  $\gamma (t)\in \partial \mathscr{O}$ we have that $\dot{\gamma}(t)$ is a tangent vector to $\partial \mathscr{O}$ at $\gamma (t)$. 
\end{theorem}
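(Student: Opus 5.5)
Existence follows by the direct method; the regularity statements (i)--(iii) are the classical theory of geodesics with obstacles, which I would reduce to a variational inequality.

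\emph{Existence.} I reparametrize admissible curves by (Riemannian) arc length, or equivalently minimize the energy $\int_0^1 L(\gamma,\dot\gamma)\,dt$ over $H^1$ curves in $X$ with fixed endpoints. Constant-speed minimizers of energy are length minimizers, and conversely, by Cauchy--Schwarz. By \eqref{eq:A}, a minimizing sequence is bounded in $H^1$, since the lengths are bounded, say by the length of a curve joining $x$ to $y$, which exists because $X$ is arcwise connected. The sequence is also uniformly bounded, since all curves start at $x$. By Arzel\`a--Ascoli it converges uniformly and weakly in $H^1$, and the limit stays in $X$ because $X$ is closed. The energy is weakly lower semicontinuous, as $L(x,\cdot)$ is convex and $A$ is continuous, so the limit $\gamma$ is a minimizer in $H^1([0,1];X)$. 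For the $C^1$ regularity, and to compare with the infimum over $C^1$ curves in \eqref{def.distance}, I would density-approximate $H^1$ curves in $X$ by $C^1$ curves in $X$. This uses the $C^2$ boundary: first push curves slightly outward along the normal, then mollify. Thus the $H^1$ and $C^1$ infima coincide, and the minimizer is $C^1$ once (i) is proved.

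\emph{Regularity.} Near a boundary point I write $\partial\mathscr O$ locally as $\{\phi=0\}$ with $X=\{\phi\ge 0\}$ and $\phi\in C^2$. First-order optimality under the constraint $\phi(\gamma)\ge0$ gives a nonnegative Radon measure $\mu$, supported on $\{t:\gamma(t)\in\partial\mathscr O\}$, such that
\[
\frac{d}{dt} L_q(\gamma,\dot\gamma)=L_x(\gamma,\dot\gamma)-\nabla\phi(\gamma)\,\mu
\]
holds in the distributional sense. I would obtain this by a penalization $\int L+\frac1\epsilon\int \phi_-(\gamma)^2$, or by a Lagrange multiplier argument in $H^1$.

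Part (ii) follows immediately: away from the contact set $\mu=0$, so $\gamma$ solves the Euler--Lagrange equation \eqref{geodesic}, which is a smooth ODE since $L_{qq}=2A$ is invertible. Hence $\gamma$ is twice differentiable there.

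The key step, and the main obstacle, is showing that $\mu$ is absolutely continuous with bounded density; this yields $\dot\gamma\in W^{1,\infty}$, i.e.\ (i). The plan is the standard obstacle-type argument. On the contact set, $t\mapsto\phi(\gamma(t))$ attains its minimum value $0$. Consider $h(t)=\phi(\gamma(t))\ge 0$. Using the equation, $\ddot h$ equals a bounded term, coming from $D^2\phi$ and the curvature of the metric, minus $\langle\nabla\phi,(2A)^{-1}\nabla\phi\rangle\mu$. Since $h\ge0$ with $h=0$ on the support of $\mu$, a one-dimensional obstacle comparison shows that the singular part of $\ddot h$ cannot be negative and large. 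This gives $\mu\le C\,dt$ with $C$ depending only on $\|\phi\|_{C^2}$, $\|A\|_{C^1}$ and a bound on the speed. The speed is constant (energy conservation, by testing with reparametrizations) and equal to the length, which is uniformly bounded for endpoints in compact sets. This makes the Lipschitz constant uniform, as claimed in (i).

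Part (iii) then follows. At an interior time $t$ with $\gamma(t)\in\partial\mathscr O$, the function $h$ is $C^1$, nonnegative, and vanishes at $t$. Hence $h'(t)=\langle\nabla\phi(\gamma(t)),\dot\gamma(t)\rangle=0$, i.e.\ $\dot\gamma(t)$ is tangent to $\partial\mathscr O$.
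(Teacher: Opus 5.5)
Your argument is sound, but it is not the paper's route. The paper gives no proof of this theorem: it refers to \cite[Lemma A.1]{RS} (and \cite{AA}) and notes that (iii) is extracted from the proof there. You instead give a self-contained variational proof: the direct method for the energy in $H^1$, a nonnegative measure multiplier $\mu$ carried by the contact set, and a one-dimensional argument on $h=\phi\circ\gamma$ showing $\mu\le C\,dt$. The citation buys brevity and rests on the established theory of gliding rays. Your route buys an explicit equation $\frac{d}{dt}L_q(\gamma,\dot\gamma)=L_x(\gamma,\dot\gamma)-\rho\,\nabla\phi(\gamma)$ with $0\le\rho\le C$. Its constants visibly depend only on $\|\phi\|_{C^2}$, $\|A\|_{C^1}$ and a bound on $\mbox{\rm dist}(x,y)$, which is exactly the uniformity (i) asks for. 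It also gives a one-line proof of (iii) instead of a pointer into someone else's argument.

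Three points to tighten.

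(a) Statements (i)--(ii) can only hold for constant-speed parametrizations, which your energy formulation selects. The push-out-and-mollify density step is unnecessary: once the $H^1$ energy minimizer is known to be $C^{1,1}$, it is an admissible $C^1$ curve. Hence the $H^1$ and $C^1$ infima coincide, and every $C^1$ length minimizer, reparametrized at constant speed, is an $H^1$ energy minimizer.

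(b) Of your two ways to obtain $\mu$, use the penalization, with a localizing term such as $\int_0^1|\gamma-\gamma^*|^2$ to select the given minimizer $\gamma^*$. The abstract multiplier rule in $H^1$ needs a Slater-type qualification, and this fails when $x$ or $y$ lies on $\partial\mathscr O$. The reason is that admissible variations vanish at the endpoints, so the rule may only return an abnormal multiplier concentrated there.

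(c) Your comparison must control all of $\mu$, not only the singular part of $\ddot h$. The clean statement is this: for contact times $t_1\le t_2$ in $]0,1[$ one has $\dot h(t_1^-)\le 0\le \dot h(t_2^+)$, hence $D\dot h([t_1,t_2])\ge 0$. Since $D\dot h=g\,dt-\langle\nabla\phi,(2A)^{-1}\nabla\phi\rangle\mu$ with $g$ bounded, this gives $\mu([t_1,t_2])\le C(t_2-t_1)$. Because $\mu$ is carried by the contact set, it follows that $\mu(I)\le C|I|$ for every interval $I$.
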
 

A proof of Theorem \ref{t:regmin} can be found in \cite[Lemma A.1]{RS} (see also \cite{AA}). Property (iii) is not explicitly given in the statement, but is obtained during the proof.

%
We now consider our fixed target point $k_0 \in X$ and, for every $x\in X$, we denote by $\Gamma^* [x]$  the set of all the length minimizers, joining $x$ with $k_0$, parametrized by the (Riemannian) arc length. That is, if $\gamma \in \Gamma^* [x]$, then it is parametrized by $t \in [0,d(x)]$ and satisfies
\begin{equation}\label{arcl}
\langle A(\gamma(t)) \dot \gamma(t), \dot \gamma(t) \rangle =1, \qquad t \in [0,d(x)].
\end{equation}
Here and in the sequel we mean that $\dot{\gamma}(0)$ is the right derivative while $\dot{\gamma }(d(x))$ stands for the left derivative, i.e.
$$\dot{\gamma}(0)=\lim_{t\to 0^+}\frac{\gamma (t)-\gamma (0)}t, \qquad \dot{\gamma}(d(x))=\lim_{t\to d(x)^-}\frac{\gamma (d(x))-\gamma (t)}{d(x)-t}.
$$ 

The geodesic equation \eqref{geodesic} also holds with this parametrization. In particular, it implies that,  if $\gamma \in \Gamma^* [x]$ and $[t_0,t_1] \subset [0,d(x)]$ is such that $\gamma(t) \notin \partial \mathscr O$ for all $t \in ]t_0,t_1[\,$, then $\gamma$ is uniquely determined by the values of $\gamma(t^*)$ and $\dot \gamma(t^*)$ for any fixed $t^* \in [t_0,t_1]$. In an interval where $\gamma$ touches $\partial \mathscr O$, instead, it can no longer be characterized as the solution of an ODE.
%

%
%
The next results shows that any point lying along a minimizer, different from the endpoints, is regular for the distance function, even on the boundary of the obstacle. In addition, we relate the elements of $\Gamma^* [x]$ with the elements of $D^*d(x)$. The corresponding result in the Euclidean case was obtained in \cite[Lemma 2.1]{ABC2}.
\begin{proposition}\label{l:vi}
Assume \eqref{eq:A}, let $\mathscr{O}\in \mathcal{O}$ and let $d$ be given by \eqref{d}. Then, for every $x\in X\setminus \{ k_0\}$ and  $\gamma \in \Gamma^*[x]$, we have that  
\begin{equation}\label{eq:s}
\dot{\gamma}(t)\in  -A^{-1}(\gamma (t)) D^*d(\gamma (t)),\qquad \forall t\in [0,d(x)].
\end{equation}
%

Furthermore,  for every $t\in ]0,d(x)[$, $\gamma (t)\notin \Sigma (d)$, i.e. 
\begin{equation}\label{eq:rlt}
D^*d(\gamma (t))=\{ Dd(\gamma (t))\},\quad \forall t\in ]0,d(x)[. 
\end{equation} 
Finally, for every $x\in X\setminus \{ k_0\}$ and for every $p\in D^*d(x)$ there exists $\gamma \in \Gamma^*[x]$ such that  $\dot{\gamma}(0)=-A^{-1}(x)p$.
\end{proposition}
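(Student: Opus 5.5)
The plan is to prove the three assertions in the order (a) \eqref{eq:s} at interior times together with \eqref{eq:rlt}, (b) \eqref{eq:s} at the endpoints $t=0$ and $t=d(x)$, and (c) the converse statement, via dynamic programming and the semiconcavity estimates of Section 2.

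\textbf{Step 1: superdifferential along the minimizer.} Fix $\gamma\in\Gamma^*[x]$ and $t\in]0,d(x)[$, and set $y=\gamma(t)$. By the dynamic programming principle, $d(\gamma(s))=d(x)-s$ for all $s$. For $z$ near $y$ we compare $z$ with the concatenation of a short Riemannian segment from $z$ to $\gamma(t+h)$ and $\gamma|_{[t+h,d(x)]}$. This gives $d(z)\le d(\gamma(t+h))+\mathrm{dist}(z,\gamma(t+h))$. When $y\in\partial\mathscr O$, the connecting curve must stay in $X$, so we take it to be a short curve in $X$ (possible since $\partial\mathscr O$ is $C^2$). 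Its length is $\langle A(y)(\gamma(t+h)-z),\gamma(t+h)-z\rangle^{1/2}+o(\cdot)$. A symmetric argument comparing with points $\gamma(t-h)$ gives a lower bound, $d(z)\ge d(\gamma(t-h))-\mathrm{dist}(\gamma(t-h),z)$. Using $\dot\gamma(t)$, and choosing $h\sim|z-y|^{1/2}$ or $h$ small relative to $|z-y|$ as needed, the two inequalities combine into
\[
d(z)=d(y)+\langle -A(y)\dot\gamma(t),z-y\rangle+o(|z-y|),\qquad z\in X .
\]
This shows that $d$ is differentiable at $y$ relative to $X$, with $Dd(y)=-A(y)\dot\gamma(t)$. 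The key point in this estimate is that $\gamma$ is $C^{1,1}$ (Theorem~\ref{t:regmin}(i)), so $\gamma(t\pm h)=y\pm h\dot\gamma(t)+O(h^2)$.

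\textbf{Step 2: upgrade to $D^*d(y)$ being a singleton.} In the interior, Theorem~\ref{t:generdiff}(1),(3) applies directly. At boundary points $y\in\partial\mathscr O$ we need more, since differentiability does not by itself exclude two reachable gradients (see the example after Corollary~\ref{r:ext}). Let $p\in D^*d(y)$. By Remark~\ref{r:int}, $d(z)\le d(y)+\langle p,z-y\rangle+C|z-y|^{3/2}$ for $z\in X$. Taking $z=\gamma(t\pm h)$ gives $\mp h\le \pm h\langle p,\dot\gamma(t)\rangle+O(h^{3/2})$, hence $\langle p,\dot\gamma(t)\rangle=-1$. Every reachable gradient also satisfies the eikonal identity $\langle A^{-1}(y)p,p\rangle=1$, since it is a limit of gradients at interior points where \eqref{eq:iconale} holds. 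The unique vector with $\langle A^{-1}p,p\rangle=1$ and $\langle p,\dot\gamma\rangle=-1$, when $\langle A\dot\gamma,\dot\gamma\rangle=1$, is $p=-A(y)\dot\gamma(t)$; this is the equality case of Cauchy--Schwarz in the $A^{-1}$ metric. This proves \eqref{eq:rlt} and \eqref{eq:s} for interior times, and this two-sided use of the $C^{1,1}$ bound at interior times is the heart of the argument. For $t=0$ (respectively $t=d(x)$), we note that $-A(\gamma(s))\dot\gamma(s)\in D^*d(\gamma(s))$ for $s\downarrow0$, that $\gamma(s)\to x$, and that $\dot\gamma$ is continuous. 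Since these gradients are limits of interior gradients, a diagonal argument gives $-A(x)\dot\gamma(0)\in D^*d(x)$. At $t=d(x)$, where the point is $k_0$, the same limit argument works.

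\textbf{Step 3: converse, and the main obstacle.} Let $p\in D^*d(x)$, with $x_h\to x$, $x_h\in\mathring X$ differentiability points, and $Dd(x_h)\to p$. Take $\gamma_h\in\Gamma^*[x_h]$. I first want $\dot\gamma_h(0)=-A^{-1}(x_h)Dd(x_h)$. This follows as in Step 2 at the endpoint $t=0$: the upper semiconcavity bound at the differentiability point $x_h$ gives $\langle Dd(x_h),\dot\gamma_h(0)\rangle=-1$, and the eikonal equation together with Cauchy--Schwarz then forces the equality. By the uniform Lipschitz bound of Theorem~\ref{t:regmin}(i) and Ascoli--Arzel\`a, $\gamma_h\to\gamma$ in $C^1$ along a subsequence, with $\gamma\in\Gamma^*[x]$ by lower semicontinuity of length and continuity of $d$. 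Then $\dot\gamma(0)=\lim\dot\gamma_h(0)=-A^{-1}(x)p$. The delicate point I expect is the $C^1$ convergence up to $t=0$. This should hold because the uniform Lipschitz bound on $\dot\gamma_h$ gives equicontinuity of the derivatives, so Ascoli applies to $\dot\gamma_h$ as well.
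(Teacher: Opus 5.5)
Your proof is correct, and its core is the paper's. Your Step 2 test of the semiconcavity inequality (Theorem \ref{t:generdiff}(4), Remark \ref{r:int}) at $\gamma(t\pm h)$, followed by the equality case of Cauchy--Schwarz, is exactly how the paper proves \eqref{eq:rlt}. The paper tests only at $\gamma(t-h)$, which is the direction carrying information; the forward test just reproduces $\langle p,\dot\gamma(t)\rangle\ge -1$, which Cauchy--Schwarz already gives. Your Step 3 is the paper's compactness argument.

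The difference is the order. The paper proves \eqref{eq:s} on $[0,d(x)[$ directly, using the one-sided directional derivative formula $\min_{p\in D^+u(y)}\langle p,\theta\rangle$ for semiconcave functions \cite[Theorem 3.3.6]{CS}, applied to $E(d)$ at boundary points, together with \eqref{fenchel}. It uses a limit only at $t=d(x)$. You instead get \eqref{eq:s} at interior times as a by-product of \eqref{eq:rlt}, and treat both endpoints by the closed-graph (diagonal) property of $D^*d$. This is slightly more economical, since one mechanism handles everything and the directional-derivative formula is not needed.

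Two corrections. First, your Step 1 is unnecessary, because the Step 2 argument works verbatim at interior points. As written, its parameter choice is also off. Your crude estimates leave an error $O(|z-y|^2/h+h^2)$, so you need $|z-y|\ll h\ll |z-y|^{1/2}$, for instance $h=|z-y|^{2/3}$. Taking $h$ small relative to $|z-y|$ returns only the Lipschitz bound, and $h\sim |z-y|^{1/2}$ leaves an error of order $|z-y|$.

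Second, in Step 3 the upper semiconcavity bound at $x_h$ yields only $\langle Dd(x_h),\dot\gamma_h(0)\rangle\ge -1$, which again Cauchy--Schwarz already provides. The needed equality comes from differentiability of $d$ at $x_h$. Alternatively, it follows directly from your endpoint inclusion $-A(x_h)\dot\gamma_h(0)\in D^*d(x_h)=\{Dd(x_h)\}$, which is the route the paper takes.
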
 

\begin{proof}[Proof of Proposition \ref{l:vi}] 
We  show first that \eqref{eq:s} holds.
 For every $\gamma \in \Gamma^*[x]$, we have that 
 \begin{equation}\label{dereq}
 d(x)-t=d(\gamma (t)),\qquad \text{for every}\qquad t\in [0,d(x)].
\end{equation}
  We claim that, for every $t\in [0,d(x)[\,$, we have
\begin{equation}\label{eq:divdir}
-1=\min_{q\in \co D^*d(\gamma (t) )}\langle q, \dot{\gamma} (t)\rangle= \min_{q\in  D^*d(\gamma (t))}\langle q,\dot{\gamma} (t)\rangle. 
\end{equation}
%
For all $t\in [0,d(x)[$ such that $\gamma (t)\notin \mathscr{O}$, \eqref{eq:divdir} follows from \cite[Theorem~3.3.6]{CS}   and the fact that  $\gamma$ is  $C^1$. If instead $\gamma (t)\in \partial \mathscr{O}$, then \eqref{eq:divdir} follows by applying the same argument to the  extension of $d$, $E(d)$, given by Theorem \ref{t:estensione} and recalling that $D^*E(d)(\gamma (t))=D^*d(\gamma (t))$. 


For simplicity of notation, we set in the following $p_t:=-\dot \gamma(t)$ and $A_t:=A(\gamma(t))$. We recall that, by \eqref{arcl}, we have
\begin{equation}\label{unit}
\langle A_t p_t, p_t \rangle =1.
\end{equation}
In addition, since $d$ is a viscosity solution of the eikonal equation \eqref{eq:iconale}, we find that 
\begin{equation}\label{subs}
	\langle A_t^{-1} q, q \rangle = 1, \quad \forall q \in D^*d(\gamma(t)).  
\end{equation}
Then from \eqref{eq:divdir}, \eqref{unit} and \eqref{subs} we deduce that there exists $q_t \in D^*d(\gamma(t))$ such that
\begin{equation}\label{chain}
 \langle q_t , p_t \rangle = - \langle q_t ,\dot \gamma(t) \rangle = 1 = 
 \frac 12 \langle A_t p_t, p_t \rangle + \frac 12 \langle A_t^{-1}q_t, q_t \rangle.
\end{equation}
It is well known that, for any positive definite $n \times n$ matrix $A$ and $p,q \in \R^n$ we have
\begin{equation}\label{fenchel}
\langle p,q \rangle \leq \frac 12 \langle Ap, p \rangle + \frac 12 \langle A^{-1}q, q \rangle,
\end{equation}
with equality if and only if $p=A^{-1}q$. Since by \eqref{chain} $p_t,q_t$ achieve the equality, we conclude that $\dot \gamma(t)=-p_t= -A_t^{-1}q_t$ for some  $q_t \in D^*d(\gamma(t))$.
Thus,  \eqref{eq:s} holds for every $t\in [0,d(x)[$.


The previous argument does not apply to the case $t=d(x)$ because
$\gamma (d(x))=k_0$ and $d(\cdot )$ is not semiconcave in any neighborhood of $k_0$.
To treat this case we observe that, by the previous step, we have 
$$
\dot{\gamma}(t)=-A^{-1}(\gamma (t))q_t,\qquad \text{for every }\; t\in [0,d(x)[ 
$$
with $q_t \in D^*d(\gamma(t))$. Using the fact that $\gamma \in C^{1,1}$ we can take the limit, as $t\to d(x)^-$, to find that $q_t$ converges to some $q\in D^*d(k_0)$ such that 
$$
-A(k_0)\dot{\gamma} (d(x))=q\in D^*d(k_0). 
$$ 
This completes the proof of \eqref{eq:s}. 

Now, let us show that $d$ is differentiable along a length minimizer, i.e. \eqref{eq:rlt} holds true. 
Let $x\in X\setminus \{ k_0\}$, $\gamma \in \Gamma^*[x]$, and $t\in ]0,d(x)[$.   By Theorem \ref{t:rb} and property \eqref{p}, see also Remark \ref{r:int}, we have that 
$$
d(\gamma (t-h))\le d(\gamma (t))+\langle q, \gamma (t-h)-\gamma (t)\rangle +C |\gamma (t-h)-\gamma (t)|^{\frac 32} ,
$$
for every $q\in D^*d(\gamma (t))$ and for every $h>0$ suitably small.  
We want to show that $D^*d(\gamma (t))$ is a singleton. We recall that 
$$
d(\gamma (t))=d(x)-t\qquad\text{and}\qquad d(\gamma (t-h))=d(x)-t+h. 
$$
Then, we find 
$$
h\le \langle q, \gamma (t-h)-\gamma (t)\rangle +C |\gamma (t-h)-\gamma (t)|^{\frac 32}\,.
$$
So, dividing both the sides of the inequality above by $h$ and taking the limit as $h\to 0^+$, we conclude that 
\begin{equation} \label{cond}
1\le \langle q, -\dot{\gamma} (t)\rangle ,
\end{equation}
for every $q \in D^*d(\gamma (t))$. 
%
On the other hand, recalling \eqref{fenchel}, \eqref{arcl} and \eqref{subs}, we find
$$
\langle q, -\dot{\gamma} (t)\rangle \leq \frac 12 \langle A^{-1}(\gamma (t))q,q\rangle +  \frac 12 
\langle A(\gamma (t))  \dot{\gamma}(t),   \dot{\gamma}(t)\rangle = 1.
$$
In view of \eqref{cond}, we conclude that $p=\dot{\gamma }(t)$ and $q$ achieve the equality in \eqref{fenchel}, which implies that 
$\dot{\gamma }(t)=-   A^{-1}(\gamma (t)) q$. Since $q \in D^*u(\gamma(t))$ is arbitrary, we conclude
$$
D^*d(\gamma (t))=\{ - A(\gamma (t))\dot{\gamma}(t)\}.
$$
This implies that $d$ is differentiable along $\gamma$ and
$$
\dot{\gamma}(t)=-A^{-1}(\gamma (t))Dd(\gamma (t)),
$$
for every $t\in ]0,d(x)[$, by possibly appealing to the extension of $d$ given by Theorem \ref{t:estensione} as above. 
Notice that the endpoints are always excluded.

It remains to prove that every element in $-A^{-1}(x) D^*d(x)$ can be taken as the initial velocity of a length minimizer. If $D^*d(x)$ is a singleton, this is an immediate consequence of the existence of minimizers and of \eqref{eq:s}. If $x$ is singular, let $p\in D^*d(x)$ and let $x_h$ be a sequence of regular points converging to $x$ such that $Dd(x_h)\to p$ as $h\to \infty$. By the previous step, there exists $\gamma_h\in \Gamma^*[x_h]$ such that $\dot{\gamma}_h(0)=-A^{-1}(x_h)Dd(x_h)$. By Theorem \ref{t:regmin}, both $\gamma_h$ and their derivatives are uniformly Lipschitz continuous. Hence, possibly taking a subsequence, we have that $\gamma_h$ uniformly converges to a limit $\gamma \in \Gamma ^*[x]$ and in addition
\begin{equation}\label{eq:crlt}
\lim_{h\to\infty} \| \dot{\gamma}_h-\dot{\gamma} \|_\infty =0. 
\end{equation}
 Now, let $s_h\in ]0,\min\{ d(x),d(x_h)\}]$ be a decreasing sequence converging to $0$ such that  
\begin{enumerate} 
\item   $|\dot{\gamma}_h (s_h)- \dot{\gamma} (s_h)|<\frac 1h$ (this can be achieved because of \eqref{eq:crlt});
\item $|\dot{\gamma}_h(s_h)-\dot{\gamma}_h(0)|<\frac 1h$ (here we use the fact that $\Gamma^*\subset C^{1,1}$),   
\end{enumerate}  
for every $h\in \N$.  Then, using once more the inclusion $\Gamma^* \subset C^{1,1}$, (1) and (2) above, 
 we conclude  that 
\begin{multline*}
\dot{\gamma}(0)=\lim_{h\to \infty } \dot{\gamma}(s_h)=\lim_{h\to \infty } \dot{\gamma}_h(s_h)=\lim_{h\to \infty }\dot{\gamma}_h (0)
\\
=-\lim_{h\to \infty }A^{-1}(x_h)Dd(x_h)=-A^{-1}(x)p.
\end{multline*}
This completes our proof of Lemma \ref{l:vi} 
\end{proof} 

 For the distance function without the obstacle, it is well-known that for any given $x$ and $p \in D^*(x)$, the trajectory $\gamma \in \Gamma^*[x]$ such that $\dot \gamma(0)=-A^{-1}(x)p$ is unique. A further property is that two distinct minimizing trajectories (either for the same point or for two distinct points) cannot intersect except possibly at the endpoints. In the presence of an obstacle, both properties can fail, as shown in Example 1 of papers \cite{Al} and \cite{ABB}. However, the properties still hold for trajectories before they touch the obstacle.

 \begin{proposition}\label{p:nuova}
(i) Let $x \in X \setminus \{k_0\}$ and let $\gamma_1 \in \Gamma^*[x]$. Let $t^* \in (0,d(x))$ be such that $\gamma(t) \notin \mathscr{O}$ for $t \in [0,t^*)$. Then, if there exists $\gamma_2 \in \Gamma^*[x]$ such that $\dot \gamma_2(0)=\dot \gamma_1(0)$, we have $\gamma_2(t)=\gamma_1(t)$ for all $t \in [0,t^*]$. \\
 (ii) Let $x_1,x_2 \in X \setminus \{k_0\}$, with $x_1 \neq x_2$ and let $\gamma_i \in \Gamma^*[x_i]$ for $i=1,2$. Suppose that there is $t^* \in (0,\min\{d(x_1),d(x_2)\} )$ such that $\gamma_i(t) \notin \mathscr{O}$ for $t \in [0,t^*)$. Then  $\gamma_1(t) \neq \gamma_2(t)$ for all $t \in (0,t^*]$.
  \end{proposition}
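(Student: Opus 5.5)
The plan is to reduce both statements to uniqueness for the geodesic ODE \eqref{geodesic}. Since $A$ is defined and of class $C^2$ on all of $\R^n$, this ODE is a genuine second-order ODE in $\R^n$ with locally Lipschitz right-hand side. In Hamiltonian form, with $p=L_q(\gamma,\dot\gamma)=2A(\gamma)\dot\gamma$, it reads
\[
\dot\gamma=\tfrac12A^{-1}(\gamma)p,\qquad \dot p=L_x\bigl(\gamma,\tfrac12A^{-1}(\gamma)p\bigr),
\]
and the right-hand side is locally Lipschitz in $(\gamma,p)$.

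The basic technical point, which I expect to be the only delicate step, is the following. Let $\gamma\in\Gamma^*[x]$ and let $[a,b]$ be such that $\gamma(t)\notin\mathscr O$ for $t\in\,]a,b[$. By Theorem~\ref{t:regmin}(ii), $\gamma$ solves \eqref{geodesic} on $]a,b[$. Since $\gamma\in C^{1,1}$, the pair $(\gamma,p)$ extends continuously to $[a,b]$, with the one-sided derivatives at the endpoints. Integrating the system on $[s,s']\subset\,]a,b[$ and letting $s\to a$, $s'\to b$, we see that $(\gamma,p)$ satisfies the integral form of the system on the closed interval $[a,b]$. Gr\"onwall's lemma then gives uniqueness for this integral equation, both forward from $t=a$ and backward from $t=b$. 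This holds even when $\gamma(a)$ or $\gamma(b)$ lies on $\partial\mathscr O$, because the ODE is posed in the whole $\R^n$.

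For (i), note first that $x=\gamma_1(0)\notin\mathscr O$. Since $\mathscr O$ is closed, $\gamma_2(t)\notin\mathscr O$ for $t\in[0,\epsilon]$ for some $\epsilon>0$. On $[0,\min\{\epsilon,t^*\}]$ both curves solve the integral system with the same data $(x,\dot\gamma_1(0))$ at $t=0$, so they coincide there. Let $s$ be the supremum of the times $\tau\le t^*$ such that $\gamma_1=\gamma_2$ on $[0,\tau]$. If $s<t^*$, then $\gamma_2(s)=\gamma_1(s)\notin\mathscr O$, and $\dot\gamma_2(s)=\dot\gamma_1(s)$ because both curves are $C^1$. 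The same argument restarted at $s$ shows that the curves agree beyond $s$, a contradiction. Hence $\gamma_1=\gamma_2$ on $[0,t^*)$, and by continuity also at $t^*$.

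For (ii), argue by contradiction and suppose $\gamma_1(\tau)=\gamma_2(\tau)=:y$ for some $\tau\in\,]0,t^*]$. Since $\tau<d(x_i)$, Proposition~\ref{l:vi} \eqref{eq:rlt} shows that $y$ is a regular point, and \eqref{eq:s} gives
\[
\dot\gamma_1(\tau)=\dot\gamma_2(\tau)=-A^{-1}(y)Dd(y).
\]
(If $y\in\partial\mathscr O$, which can happen only when $\tau=t^*$, we use the extension of Theorem~\ref{t:estensione} exactly as in the proof of Proposition~\ref{l:vi}.) Both curves avoid $\mathscr O$ on $[0,\tau)$, so by the technical point above both solve the integral system on $[0,\tau]$ with identical terminal data at $t=\tau$. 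Backward uniqueness then gives $\gamma_1=\gamma_2$ on $[0,\tau]$, and in particular $x_1=\gamma_1(0)=\gamma_2(0)=x_2$, contradicting $x_1\neq x_2$. The relation \eqref{dereq} is consistent with this conclusion: it gives $d(x_1)=d(y)+\tau=d(x_2)$, so both curves use the same time parametrization, but it is not needed in the argument.

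The main obstacle I anticipate is justifying uniqueness at the endpoints of the time interval. There the curve is only known to be one-sidedly $C^1$ and may touch $\partial\mathscr O$, which is why I pass to the integral formulation of the Lipschitz Hamiltonian system on all of $\R^n$ rather than working with \eqref{geodesic} pointwise.
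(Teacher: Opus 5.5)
Your proposal is correct and follows essentially the same route as the paper. Both parts reduce to uniqueness for the geodesic ODE \eqref{geodesic}: forward from $t=0$ in (i), and backward from the meeting time in (ii), where Proposition~\ref{l:vi} gives equal velocities at the meeting point. Your Hamiltonian integral formulation with Gr\"onwall just makes explicit a step the paper leaves implicit, namely that uniqueness holds up to an endpoint where the curve may touch $\partial\mathscr O$. Your parenthetical appeal to Theorem~\ref{t:estensione} is unnecessary, since Proposition~\ref{l:vi} is already stated for boundary points.
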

 \begin{proof}
 (i)  By hypothesis $x=\gamma_1(0)=\gamma_2(0) \notin \mathscr{O}$.
 Let $t_2 \in (0,d(x_2)]$ be the supremum of the $t$ such that $\gamma_2(t)  \notin \mathscr{O}$. Then  $\gamma_1$ and $\gamma_2$ are solutions of the geodesic ODE \eqref{geodesic} on the intervals $[0,t^*]$ and $[0,t_2]$ respectively, with the same initial conditions for the position and the velocity. Hence they coincide on the intersection of these intervals. This shows that $t_2 \geq t^*$ and that the curves  coincide on $[0,t^*]$. \\
 (ii)
 Suppose by contradiction that $\gamma_1(\bar t) = \gamma_2(\bar t)=:\bar x$ for some $\bar t \in (0,t^*]$. Then the previous Proposition implies that $d$ is differentiable at $\bar x$ and that
 $$
 \dot \gamma_1(\bar t)=\dot \gamma_2(\bar t)=-A^{-1}(\bar x)Dd(\bar x).
 $$
 In addition, both trajectories are solutions of the geodesic ODE on $[0,\bar t]$. Hence, they must coincide for $t \in [0,\bar t]$, in contradiction with the assumption $x_1 \neq x_2$.
 \end{proof}
 
Now, we are ready to prove our main results. 

\section{Proof of Theorem \ref{t:existence}} 
\setcounter{equation}{0}
\setcounter{theorem}{0}
 
We prove a preliminary result on the regularity of the level sets of a semiconcave function, which has an independent interest. Observe that the nondegeneracy condition (3) in the statement below is always fulfilled in the case of an eikonal equation.

\begin{lemma}\label{l:ew}
Let $u\in SC^\alpha_{loc}(U)$, with $U \subset \R^n$ open. For a given $u_0\in \R$, let $L_0$ be a connected component of the level set $\{ x\in \Omega \ | \ u(x)=u_0\}$. Assume that 
\begin{enumerate}
\item $ \emptyset\not=L_0\subset\subset U$, 
\item $L_0\cap \Sigma (u)=\emptyset$, 
\item $Du(x)\not=0$ for every $x\in L_0$.
\end{enumerate}
Then $L_0$ is a boundaryless compact  $(n-1)$-manifold of class $C^1$.
\end{lemma}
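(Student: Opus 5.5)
The plan is to show that $u$ is of class $C^1$ on a neighbourhood of $L_0$ and then apply the implicit function theorem. (We read the $\Omega$ in the statement as $U$.)

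\emph{Step 1: $C^1$ regularity near $L_0$.} By (2) and Corollary \ref{r:ext}(1), $u$ is differentiable at every $x\in L_0$, so $D^*u(x)=D^+u(x)=\{Du(x)\}$. We claim that $Du$ is continuous at each point of $L_0$, in the following sense. Let $y_k\to x\in L_0$ and $p_k\in D^+u(y_k)$. Local Lipschitz continuity makes $\{p_k\}$ bounded. By Theorem \ref{t:generdiff}(2), every limit point of $\{p_k\}$ lies in $D^+u(x)=\{Du(x)\}$, so $p_k\to Du(x)$. Hence the multifunction $D^+u$ is upper semicontinuous and single valued at points of $L_0$.

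This regularity holds only on $L_0$ itself, not on a full neighbourhood, since nearby points off $L_0$ may be singular. The implicit function theorem therefore cannot be applied directly, and we use a nonsmooth version instead.

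\emph{Step 2: local graph representation.} Fix $x_0\in L_0$ and set $\nu=Du(x_0)/|Du(x_0)|\neq0$, using (3). By Step 1 there is $r>0$ with $\langle p,\nu\rangle\ge |Du(x_0)|/2$ for all $p\in D^+u(y)$ and $y\in B_r(x_0)$.

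For $y\in B_r(x_0)$ and $s<t$ with the segment in the ball, consider $\phi(\tau)=u(y+\tau\nu)$. Semiconcave functions have one-sided directional derivatives given by $\min_{p\in D^+u}\langle p,\nu\rangle$. Applying Lebourg's mean value theorem (or integrating $\phi'$, which exists a.e.), $\phi$ is strictly increasing, with $\phi(t)-\phi(s)\ge c(t-s)$.

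Write points near $x_0$ as $y'+\tau\nu$ with $y'\perp\nu$. For each $y'$ near $0$ there is a unique $\tau=g(y')$ with $u(y'+g(y')\nu)=u_0$. Continuity of $u$ and the strict monotonicity give existence, uniqueness and continuity of $g$. Moreover, $g$ is Lipschitz, because $u$ is Lipschitz and increases at rate $\ge c$ in direction $\nu$.

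\emph{Step 3: differentiability of $g$.} The graph of $g$ is the local zero set $\{u=u_0\}\cap B_r(x_0)$. Since this set is connected near $x_0$, it lies in $L_0$, so $u$ is differentiable at each graph point $z$ with gradient $Du(z)\neq0$. Differentiating $u(y'+g(y')\nu)=u_0$ via the Fréchet expansion of $u$ at $z$ and the Lipschitz bound on $g$ shows that $g$ is differentiable at $y'$, with
\[
Dg(y')=-\frac{\pi\, Du(z)}{\langle Du(z),\nu\rangle},
\]
where $\pi$ is the projection onto $\nu^\perp$. This expression is continuous in $y'$ by Step 1, so $g\in C^1$. Hence $L_0$ is locally a $C^1$ graph near each of its points, i.e.\ a boundaryless $C^1$ hypersurface.

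\emph{Step 4: compactness.} $L_0$ is closed, being a connected component of a closed set relative to $U$; by (1) it is relatively compact in $U$, hence compact.

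The main obstacle is Step 2, namely obtaining strict monotonicity along $\nu$ uniformly in a full neighbourhood even though $u$ may be singular off $L_0$. The upper semicontinuity of $D^+u$ from Theorem \ref{t:generdiff}(2), together with the formula for directional derivatives of semiconcave functions, handles this.
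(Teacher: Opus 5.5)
Your proof is correct. Its first half matches the paper's in substance. The paper obtains the local Lipschitz-graph structure of $L_0$ by citing Clarke's nonsmooth implicit function theorem (Section 7.1 of \cite{Cl}), once $D^+u$ is identified with the Clarke gradient. You prove that step by hand, from the upper semicontinuity of $D^+u$ at $x_0$ and the formula $\partial u(y;\nu)=\min_{p\in D^+u(y)}\langle p,\nu\rangle$.

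The real difference is in the $C^1$ upgrade. The paper notes that $Du$ is continuous on $L_0$ and uses Whitney's extension theorem to build $\tilde u\in C^1(\R^n)$ with $\tilde u=u$ and $D\tilde u=Du$ on $L_0$. It then applies the classical implicit function theorem to $\tilde u$. You instead differentiate the implicit function $g$ directly. For this you use only the Fr\'echet differentiability of $u$ at graph points, the Lipschitz bound on $g$, and $\langle Du(z),\nu\rangle\ge c$; continuity of $Dg$ is inherited from continuity of $Du|_{L_0}$.

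Your route is more elementary and self-contained in two respects:
\begin{itemize}
\item It needs no Whitney compatibility condition. The paper uses without comment that $\langle Du(x),y-x\rangle=o(|y-x|)$ uniformly for $x,y\in L_0$. This does hold, by applying the semiconcavity inequality at both $x$ and $y$ together with uniform continuity of $Du$ on $L_0$.
\item It needs no identification of $L_0$ with the local zero set of $\tilde u$.
\end{itemize}
You also make explicit two points the paper leaves implicit. The local level set near $x_0$ lies in $L_0$ because the graph of $g$ is connected. And $L_0$ is compact because components are relatively closed. The paper's version is shorter on the page, at the price of relying on two external theorems whose hypotheses have to be checked.

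A cosmetic point: the graph of $g$ coincides with the level set inside a small cylinder $\{|y'|<\rho,\ |\tau|<\tau_0\}$ centred at $x_0$, not inside $B_r(x_0)$ itself.
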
 
\begin{proof}
By Theorem \ref{t:generdiff}(i), the superdifferential of $u$ coincides with the Clarke differential, see \cite{Cl}. Hence hypothesis (3) allows to apply the nonsmooth version of the implicit function theorem, see Section 7.1 in \cite{Cl}, to obtain that $L_0$ is locally a Lipschitz graph. 

We further observe that, by Theorem \ref{t:generdiff}(3), $Du$ is continuous on $L_0$. Hence, the restriction of $u$ to $L_0$ satisfies the assumptions of Whitney's extension theorem (see e.g. \cite[Theorem 2.3.6]{H}) and there exists a function $\tilde{u}\in C^1(\R^n)$, such that $\tilde u(x)=u(x) $ and $D\tilde u(x)=Du(x)$ for all $x \in L_0$. The classical implicit function theorem applied to $\tilde u$ now shows that $L_0$ is not only Lipschitz but also a $C^1$ graph. 
\end{proof}

\noindent {\em Proof of Theorem \ref{t:existence}}. 
 For $a>0$, we define  
$$
 \Lambda (a):=\{ x\in X \, :\, d(x)=a\}.
$$
We observe that, since $d(x)\geq c_0|x-k_0|$ in $X$,  $\Lambda (a)$ is compact.

We assume by contradiction that there exists $a_0>d_+$ and a connected component of $\Lambda (a_0)$, which we denote by $K_0$, which does not contain singular points for $d$. By construction, $K_0$ is disjoint from the obstacle $\mathscr O$. Furthermore, we know from \eqref{eq:iconale} that $d$ satisfies in the classical sense 
 $$
\langle A^{-1}(x) D d(x),Dd(x)\rangle =1\qquad \forall x\in K_0.   
 $$
This shows in particular that $Dd(x) \neq 0$ at these points. Hence, from Lemma \ref{l:ew} we deduce that $K_0$ is a boundaryless compact  $(n-1)$-manifold of class $C^1$. Let us now define
$$
T :=
\sup \{ t \in [0,a_0] \ : \ \gamma(t) \notin {\mathscr O} \mbox{ for all } \gamma \in \Gamma^*[x], x \in K_0 \}.
$$
Then $T$ is the first time at which an optimal trajectory starting from $K_0$ touches the obstacle, if such a time exists; if instead all minimizers from $K_0$ reach the target without touching the obstacle, we have $T=a_0$. By definition of $d_+$, we have $T \geq a_0-d_+$.

By the assumption that the points of $K_0$ are regular and from Propositions \ref{l:vi} and \ref{p:nuova} we know that, for any $x \in K_0$, the minimizer for $x$ is uniquely determined for $t \in [0,T]$. Let us denote by $y_x(\cdot)$ such trajectory. We define a map $F: K_0 \times [0,T] \to X$ by setting
$F(x,t)=y_x(t)$ and we set $K_t:=F(K_0,t)$. Proposition \ref{l:vi} also implies that every point of $K_t$ is regular with nonzero gradient, except in the case $t=T=a_0$ when such a set reduces to the point $k_0$.

We observe that $\dot y_x(0)=A^{-1}(x)Dd(x)$ and so it depends continuously on $x$. Therefore the map $F$ is continuous, by the continuous dependence on the initial data of the solution to the geodesic ODE. In addition, $F$ is injective on $K_0 \times [0,T[$ by Proposition \ref{p:nuova}; if $T<a_0$, it is also injective up to $t=T$. Then Brouwer's theorem on the invariance of the domain, see e.g. \cite[Cor. 19.7]{Bredon}, implies that $F$ is an open map on $K_0 \times \, ]0,T[ \,$.


{\bf Claim:} $T<a_0$

\noindent {\it Proof of the claim:}  We argue by contradiction and suppose $T=a_0$. We consider the region swept by the trajectories
$$
 {\mathcal A} := \displaystyle \cup_{0<t<T} K_t = F( K_0 \times ]0,T[),
$$
which is an open subset of $\R^n$, since $F$ is an open map. Observe that
$$
\partial {\mathcal A}=K_0 \cup \{ k_0 \}.
$$
In fact, $K_0 \cup \{ k_0 \}$ is contained in $\partial \mathcal A$ by construction and the assumption $T=a_0$. On the other hand, since $\mathcal A$ is open, any $\bar y \in  \partial {\mathcal A}$ is such that $\bar y \notin {\mathcal A}$ but $\bar y=\lim F(x_k,t_k)$ for some sequences $\{ x_k \} \subset K_0$ and $\{ t_k \} \subset \, ]0,T[$. By compactness, we can assume that $x_k \to \bar x \in K_0$ and $t_k \to \bar t \in [0,T]$, hence $\bar y=F(\bar x,\bar t)$. Since $\bar y \notin {\mathcal A}$, we have that either $\bar t=0$ or $\bar t=T$, which implies $\bar y \in K_0$ or $\bar y=k_0$ respectively.

This implies that ${\mathcal A}$ is a pointed neighbourhood of $k_0$, that is, there exists $\delta>0$ such that
$$
y \in X, \, 0<|y-k_0|<\delta \quad \Longrightarrow \quad y \in \mathcal A.
$$ 
If not, there would be points in $X$ arbitrarily close to $k_0$, different from $k_0$ and not belonging to $\mathcal A$. Then there would also be points $\bar y \in \partial \mathcal A$ different from $k_0$ and arbitrarily close to $k_0$, in contradiction with the property that $\partial {\mathcal A}=K_0 \cup \{ k_0 \}$, where $K_0$ has positive distance from $k_0$.

The above property of $\mathcal A$ easily leads to a contradiction. Consider in fact a point $x_- \in \partial { \mathscr O}$ such that $d(x_-)=\min_{\partial { \mathscr O}} d$. Then any distance minimizing trajectory $\gamma_-$ from $x_-$ to $k_0$ does not touch $ \mathscr O$ except at the initial point. On the other hand, $\gamma_-$ must cross some trajectory $y_x$ starting from $K_0$ because $\mathcal A$ is a pointed neighbourhood of $k_0$. This contradicts the uniqueness property of minimizers away from the obstacle, because $y_x$ is defined in a larger interval $[0,d_+]$ than $\gamma_-$ and does not touch the obstacle. This proves our claim that $T<a_0$.

We now proceed with the proof of our theorem. Since $T<a_0$, there exists $\bar x \in K_0$ such that $\bar y :=y_{\bar x}(T) \in \partial\mathscr O$. We want to show that, similarly to Lemma \ref{l:ew}, $K_T=F(K_0,T)$ is a $C^1$ hypersurface, which therefore touches from outside the obstacle $\mathscr O$ at the point $\bar y$. This time the regularity of $K_T$ is more difficult to analyze because the set is not contained in the interior of $\R^n \setminus \mathscr O$.

For this reason, we consider the semiconcave extension $E(d)$ of $d$ in $B_\delta(\bar y)$ for some $\delta>0$ given by Theorem \ref{t:estensione}. We consider the level set $\{ y \in B_\delta(\bar y) \ : \ E(d)(y)=d(\bar y)=a_0-T\}$, which by construction contains $K_T \cap B_\delta(\bar y)$; such a set may apriori contain other points in the interior of $\mathscr O$ added by the extension. We show that this is not the case and that, at least in a smaller neighbourhood, the level set of the extended function is not larger than the original one. \smallskip

{\bf Claim:} There exists $\delta' \in ]0,\delta]$ such that
$$\{ y \in B_{\delta'}(\bar y) \ : \ E(u)(y)=d(\bar y)\}=K_T \cap B_{\delta'}(\bar y). \smallskip$$ 

\noindent {\it Proof of the claim:} Since $D^*E(d)(\bar y)= D^*d(\bar y)=\{ Dd(\bar y) \}$, with $Dd(\bar y) \neq 0$, we can apply the nonsmooth implicit function theorem and obtain that the level set of $E(d)$ containing $\bar y$ is locally a Lipschitz graph. 
More precisely, let us write $y \in \R^n$ as $y=(y',y_n)$ with $y' \in \R^{n-1}$.  Then, after possibly reordering the coordinates and choosing a smaller $\delta>0$, there is a Lipschitz function $\phi:B_\delta(\bar{y}') \subset \R^{n-1} \to \R$ such that any $y=(y',y_n) \in B_\delta(\bar y)$ satisfies $E(d)(y)=d(\bar y)=a_0-T$ if and only if $y_n=\phi(y')$.

%
%
%
%

Let us write for simplicity $F_T(\cdot)=F(\cdot,T)$. We can choose $\eta>0$ small enough so that $F_T(x) \in B_\delta(\bar y)$ for all  $x \in K_0 \cap B_{\eta}(\bar x)$. We denote by $P:\R^n \to \R^{n-1}$ the projection $P(y',y_n)=y'$. We recall that $F_T$ is injective, and therefore the composition $P \circ F_T$ is continuous and injective from  $K_0 \cap B_{\eta}(x_0)$ to $B_\delta(\bar{y}') \subset \R^{n-1}$. By Brouwer's theorem on the invariance of the domain, $P \circ F_T$ is open. In particular, all points in a neighbourhood of $\bar y'$ of some radius $\delta' \leq \delta$ are projections of points of $F_T(K_0 \cap B_\eta(\bar x) )$.  Since the points of the graph of $\phi$ are in one-to-one correspondence with their projections on the domain of $\phi$, this shows that not only is $F_T(K_0 \cap B_\eta(\bar x) )$  contained in the graph of $\phi$, but it entirely covers the intersection of the graph of $\phi$ with $B_{\delta '}(\bar y)$ as well. That is, any $y \in B_{\delta '}(\bar y)$ such that $E(d)(y)=d (\bar y)$ is also such that $y \in F_T(K_0 \cap B_\eta(\bar x) )\subset K_T$, and the claim is proved.

We can now conclude the proof of our theorem. The last claim shows that, in a neighborhood of $\bar y$, the set $K_T$ coincides with the level set of the semiconcave function $E(d)$ and is a Lipschitz graph. In addition, we know that $d$ is differentiable with nonzero gradient at the points of $K_T$. By using Whitney's extension theorem as in Lemma \ref{l:ew}, we obtain that $K_T \cap B_{\delta'}(\bar y)$ is actually a $C^1$-graph. 

Since $d$ is constant on $K_T$, we have that $D d(\bar y)\in N_{\bar y} \, K_T$, where $N_{\bar y}$ denotes the normal space at $\bar y$. On the other hand $K_T$ and $\partial\mathscr O$ are two $C^1$ hypersurfaces which touch at $\bar y$ without crossing, hence they have the same normal direction at $\bar y$. It follows
that
\begin{equation}\label{eq:ddinNaa}
  Dd(\bar y)\in N_{\bar y} \partial \mathscr{O}.
  \end{equation}

On the other hand, we know that optimal trajectories satisfy at contact points with the obstacle
\begin{equation}\label{eq:vtanaa}
  \dot{y}_{\bar x}(T)=-A^{-1}(\bar y)D d(\bar y)\in T_{\bar y} \partial \mathscr O . 
\end{equation}

Hence \eqref{eq:ddinNaa} and \eqref{eq:vtanaa} yield the contradiction
$$
0= \langle A^{-1}(\bar y)Dd(\bar y), D d(\bar y)\rangle=1, 
$$
since $d$ satisfies the eikonal equation in the classical sense at the points of differentiability. This also holds at boundary points, because at such points we still have $D^*d(\bar y)=\{Dd(\bar y)\}$ and the equality follows by continuity.  
This completes the proof of Theorem \ref{t:existence}.


 \section{An example with no singular boundary points}\label{whale}
 
 We describe an example in the plane where we consider the Euclidean metric. We consider an obstacle  $\mathscr{O} \subset \R^2$ and target $k_0$ as in the picture. 
 
  \begin{figure}
 \begin{tikzpicture}
 \draw[thick, dotted] (0,0) -- (3,0);
\draw [very thick](0,0) arc (-90:90:1cm); 
\draw[thick,dashed] (0,-0.05) arc (-90:90:1.05cm); 
\draw [very thick](0,2) arc (90:180:2cm);
\draw[thick,dashed] (0,2.05) arc (90:180:2.05cm);
\draw [very thick](-2,0) -- (-2,-3);
\draw[thick,dashed] (-2.05,0) -- (-2.05,-3);
\draw [very thick](-2,-3) arc (180:450:5cm and 3cm);
\draw [thick,dotted](8.05,-3) arc (0:90:4.9cm and 3.05cm);
\draw[thick,dashed] (-2.05,-3) arc (180:230:5.05cm and 3.05cm);
\draw[thick,dashed] (-0.3,-5.3) -- (3,-7);
\draw[thick,dotted] (6.25,-5.35) arc (310:360:5.05cm and 3.05cm);
\draw[thick,dotted] (6.25,-5.35) -- (3,-7);

\draw [very thick](0,0) arc (90:360:1.5cm);
\draw [very thick](3,0) arc (90:180:1.5cm);
\draw (2.88,0) arc (90:143:1.4cm);
\draw (0,-3) -- (1.78,-.54);
\filldraw [black] [radius=2pt](0,0)circle ;
\node [] at (0,-0.5) {$P$};
\filldraw [radius=2pt](3,0)circle ;
\node  at (3,-0.5) {$Q$};
\filldraw [radius=2pt](0,-3)circle ;
\node  at (0,-3.5) {$S$};
\node  at (4,-3) {\Large $ \mathscr O$};
\node  at (-3.5,-1) {\large $ \gamma_1$};
\draw [thick,dashed,->] (-3,-1) -- (-2.2,-1);
\node  at (7,1) {\large $ \gamma_2$};
\draw [thick,dotted,->] (7,.5) -- (6,-.5);
\node [] at (2, 2) {$\Sigma(d)$};
\draw [->] (2,1.6) -- (2,.8);
\draw (0.6,0.1) .. controls (2,.5) .. (3,1.5);
\filldraw [black] [radius=2pt](3,-7)circle ;
\node [] at (2.5, -7) {$k_0$};
\end{tikzpicture}
 \end{figure}

The relevant properties of the data are the following:
\begin{itemize}
\item The tangent line to $\partial \mathscr O$ at the point $P$ is also tangent to  $\partial \mathscr O$ at $Q$.
\item The target point $k_0$ is placed in such a way that there are two distinct minimizers from the point $P$ which go around the obstacle on opposite sides. These trajectories are called $\gamma_1$ and $\gamma_2$ and are the dashed and the dotted one in the picture, respectively. Observe that $\gamma_1$ and $\gamma_2$ start from $P$ with the same initial velocity.
\end{itemize}

 We claim that that all points of $\partial \mathscr{O}$ are regular. We begin by observing that $P$ is regular because the only minimizers are $\gamma_1$ and $\gamma_2$ which have the same initial velocity: then by
 Proposition \ref{l:vi} the set $D^*d(P)$ is a singleton and $P$ is regular.
 The other points of $\partial \mathscr{O}$ which lie along the trajectories $\gamma_1$ and $\gamma_2$ are regular, also by Proposition \ref{l:vi}.
 The points of the lower part of $\partial \mathscr O$ which lie neither on $\gamma_1$ nor on $\gamma_2$ can be connected to $k_0$ by a straight segment without crossing the obstacle; hence the segment is the unique minimizer and these points are regular too.
 
It remains to consider the part of the boundary surrounding the ``cavity'' between $P$ and $Q$, like the point $S$ drawn in the picture. In order to reach $k_0$, any minimizer $\gamma$ for $S$ must cross the segment between $P$ and $Q$. Since this segment is a part of $\gamma_2$, $\gamma$ can only cross it at $P$ or at $Q$, because distinct minimizers cannot intersect except on the obstacle. It follows that the initial part of $\gamma$ is a distance minimizing arc from $S$ to either $P$ or $Q$. Observe that any point $S$ in the part of the boundary we are considering can be connected to $P$ by a straight segment. Then if $\gamma$ passes through $P$ it would start with such a segment: but this contradicts property (ii) of Theorem \ref{t:regmin}, since the boundary of the obstacle at $P$ is horizontal and cannot coincide with the direction of the segment from $S$ to $P$. Therefore $\gamma$ passes through $Q$. However, it is easy to see that the minimizing path from $S$ to $Q$ is unique. If $S$ is as in the picture, the minimizer starts with a segment leaving the boundary and hitting it again tangentially at a later point, from which it continues up to $Q$ along the boundary. For points closer to $Q$, the first part may be missing and the minimizer reaches $Q$ by just following the boundary. In both cases, the minimizer is unique and the point is regular. Therefore all boundary points are regular.

As a final remark, it is easy to see that there is an arc of singular points starting from $P$ (but not including $P$, as seen above) which have two minimizing trajectories, reaching the boundary of the obstacle in opposite directions and merging with $\gamma_1$ and $\gamma_2$ respectively. The arc propagates up to infinity and contains in particular the points on high level sets of the distance whose existence follows from Theorem \ref{t:existence}.

  \section{Proof of Theorem \ref{t:propagation}} 
\setcounter{equation}{0}
\setcounter{theorem}{0}

For our next result, it is useful to consider also the distance function from the obstacle, given by
$$
d_{\mathscr{O}}(x):=\min_{y \in \partial \mathscr{O}}\mbox{\rm dist}(x,y), \qquad x\in X.
$$

  \begin{lemma}\label{le1}
For every $x\in \mathring{X}$, $x \neq k_0$, we have 
 \begin{equation}\label{d*d*}
 D^*d(x)\cap  D^*d_{\mathscr{O}}(x)=\emptyset . 
 \end{equation}
 \end{lemma}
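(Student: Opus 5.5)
Suppose by contradiction that some $p\in D^*d(x)\cap D^*d_{\mathscr{O}}(x)$ exists, with $x\in\mathring X$, $x\neq k_0$. The plan is to turn $p$ into a minimizer for $d$ and a minimizer for $d_{\mathscr O}$ that start from $x$ with the same velocity, and then use uniqueness for the geodesic ODE to reach a contradiction. Note first that $x\notin\partial\mathscr O$, so $d_{\mathscr O}(x)>0$.

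Step 1 (the minimizer for $d$). By the last assertion of Proposition~\ref{l:vi}, there is $\gamma\in\Gamma^*[x]$ with $\dot\gamma(0)=-A^{-1}(x)p$. By \eqref{subs}, $\langle A^{-1}(x)p,p\rangle=1$.

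Step 2 (the minimizer for $d_{\mathscr O}$). I expect this to be the main obstacle. Away from $\partial\mathscr O$, $d_{\mathscr O}$ is the value function of a free (unconstrained) problem with target $\partial\mathscr O$, at least locally near $x$. I would argue that minimizers for $d_{\mathscr O}$ never touch $\partial\mathscr O$ before their endpoint, since otherwise they could be shortened. Hence these minimizers are genuine geodesics on $[0,d_{\mathscr O}(x)[$, and $d_{\mathscr O}$ is locally semiconcave with linear modulus near $x$.

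Next I would repeat the argument of Proposition~\ref{l:vi}: take regular points $x_h\to x$ with $Dd_{\mathscr O}(x_h)\to p$, take their unique minimizers, and pass to the limit. This yields a minimizer $\eta$ from $x$ to a point $z\in\partial\mathscr O$, with $\mathrm{dist}(x,z)=d_{\mathscr O}(x)$, $\dot\eta(0)=-A^{-1}(x)p$, and $\eta(t)\notin\mathscr O$ for $t<d_{\mathscr O}(x)$.

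Step 3 (comparison). Both $\gamma$ and $\eta$ solve the geodesic ODE \eqref{geodesic} with the same initial position and velocity, as long as neither touches $\mathscr O$. Let $\tau:=\min\{d_{\mathscr O}(x),\,d(x)\}$. Every point $\gamma(t)$ satisfies $d_{\mathscr O}(\gamma(t))\ge d_{\mathscr O}(x)-t>0$ for $t<d_{\mathscr O}(x)$, so $\gamma$ does not touch $\mathscr O$ on $[0,\tau[$. Hence $\gamma=\eta$ on $[0,\tau]$. Two cases remain.

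Case $d(x)\le d_{\mathscr O}(x)$. Then $k_0=\gamma(d(x))=\eta(d(x))$ with $d(x)<d_{\mathscr O}(x)$ or $k_0\in\partial\mathscr O$. The second option is impossible because $k_0\notin\mathscr O$. In the first option, $\eta$ continues past $k_0$; this is not immediately contradictory, but then $d_{\mathscr O}(x)\le d(x)+d_{\mathscr O}(k_0)$ with equality along $\eta$.

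Case $d_{\mathscr O}(x)<d(x)$. Then $\gamma(d_{\mathscr O}(x))=z\in\partial\mathscr O$, and by Theorem~\ref{t:regmin}(iii) $\dot\gamma$ is tangent to $\partial\mathscr O$ at $z$. On the other hand, $\eta$ minimizes the distance to $\partial\mathscr O$, so the first-order (transversality) condition forces $\dot\eta$ at $z$ to be $A$-normal to $\partial\mathscr O$, and in particular nonzero and not tangent. This is a contradiction.

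The first case needs a separate argument. There I would use that $p\in D^*d_{\mathscr O}(x)$ means $d_{\mathscr O}$ decreases at unit rate along $\gamma$ up to $k_0$, while $d$ also decreases at unit rate. Reversing roles, I would apply differentiability along minimizers (Proposition~\ref{l:vi} for $d_{\mathscr O}$) at the interior point $k_0$ of $\eta$. This gives that $d_{\mathscr O}$ is differentiable at $k_0$ with gradient $-A(k_0)\dot\gamma(d(x))$, and I would look for an incompatibility with $k_0$ being singular for $d$. This subcase is where I am least certain and would try first to make rigorous.
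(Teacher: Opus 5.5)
Your Case $d_{\mathscr O}(x)<d(x)$ is exactly the paper's proof. There, $\gamma$ and the $d_{\mathscr O}$-minimizer coincide up to the first contact with $\partial\mathscr O$, where tangency (Theorem~\ref{t:regmin}(iii)) contradicts $A$-orthogonality. The gap is the other case, $k_0=\eta(d(x))$ with $d(x)<d_{\mathscr O}(x)$. You are right that it is not contradictory, and it cannot be closed, because the lemma is false there. Take $n=2$, $A\equiv I$, $\mathscr O=\overline{B_1(0)}$, $k_0=(2,0)$, $x=(3,0)$. Near $x$ one has $d(y)=|y-k_0|$ and $d_{\mathscr O}(y)=|y|-1$, both smooth, so $D^*d(x)=D^*d_{\mathscr O}(x)=\{(1,0)\}$. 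The $d_{\mathscr O}$-minimizer is the segment from $x$ to $(1,0)$, and it passes through $k_0$. Your proposed route, pitting differentiability of $d_{\mathscr O}$ at $k_0$ against singularity of $d$ at $k_0$, yields nothing: both facts hold in this example. The paper's argument has the same blind spot. Its phrase ``coincide until they reach the obstacle'' tacitly assumes that $\gamma$ meets $\partial\mathscr O$ before ending at $k_0$.

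What is true is the lemma for $x\in\Sigma(d)\cap\mathring X$, $x\neq k_0$, and that is all that Theorem~\ref{t:propagation} and Corollary~\ref{cprop} actually use. Under that hypothesis your Case 1 closes as follows. Suppose $k_0=\eta(d(x))$ with $d(x)<d_{\mathscr O}(x)$, and let $\tilde\gamma\in\Gamma^*[x]$ be arbitrary.
\begin{itemize}
\item $\tilde\gamma$ cannot touch $\partial\mathscr O$: if $\tilde\gamma(s)\in\partial\mathscr O$, then $d_{\mathscr O}(x)\le s\le d(x)<d_{\mathscr O}(x)$.
\item Its concatenation with $\eta|_{[d(x),d_{\mathscr O}(x)]}$ has length $d_{\mathscr O}(x)$, so it is a minimizer from $x$ to $\partial\mathscr O$. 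It therefore cannot have a corner at $k_0\in\mathring X$, since the corner could be cut. Hence $\dot{\tilde\gamma}(d(x))=\dot\eta(d(x))$.
\item Backward uniqueness for \eqref{geodesic} on $[0,d(x)]$ then gives $\tilde\gamma=\eta$ there.
\end{itemize}
So every element of $\Gamma^*[x]$ has initial velocity $\dot\eta(0)$. The last assertion of Proposition~\ref{l:vi} then forces $D^*d(x)=\{-A(x)\dot\eta(0)\}$, i.e.\ $x\notin\Sigma(d)$. Your Steps 1--3 plus this observation prove the corrected statement, but no argument can prove the lemma as written.
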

  \begin{proof}
  Assume, by contradiction, that there is $p \in D^*d(x) \cap  D^*d_{\mathscr{O}}(x)$ for some $x\in \mathring{X}$, $x \neq k_0$
  . Then Proposition \ref{l:vi} implies that there exists a minimizer $\gamma \in \Gamma^*[x]$ for the distance of $x$ to the target such that $\dot{\gamma}(0)=-A^{-1}(x) p$. The same argument shows that there also exists a minimizer $\zeta$ for the distance to the obstacle $d_{\mathscr{O}}(x)$, parametrized by arclength, such that $\dot{\zeta}(0)=-A^{-1}(x) p$. By definition, $\zeta$  does not touch the obstacle except at the final endpoint. 
 
The arcs $\zeta$ and $\gamma$ are both solutions of the geodesic ODE with the same initial conditions and therefore coincide until they reach the obstacle. But this is a contradiction, since $\zeta$ minimizes the distance from $\mathscr{O}$ and therefore hits $\partial \mathscr{O}$ orthogonally (with respect to the metric induced by $A$), while $\gamma$ can only touch $\partial \mathscr{O}$ tangentially, by Theorem \ref{t:regmin}. This proves the assertion.
\end{proof}
  
  We can now give the proof of Theorem \ref{t:propagation}.
  
 \noindent
\begin{proof} 
Let $x_0\in \Sigma (d)$, with $x_0\notin \{k_0\}\cup \partial \mathscr{O}$.
Then, due to Theorem \ref{t:rb}(i), $d$ is a semiconcave with linear modulus in a suitable open neighborhood of $x_0$ in $X$. We want to apply the abstract propagation result of Theorem \ref{t:acprop} to the present situation.  We recall that
\begin{equation}\label{eq:ellipsoid-a}
D^*d(x_0) \subset \{ p\in \R^n\ | \ \langle A^{-1}(x_0)p,p\rangle =1\}
  \end{equation}
We claim that the inclusion is strict.



For this purpose, let us consider an arc $\zeta$ from $x_0$ to the obstacle attaining the minimum in  $d_{\mathscr{O}}(x_0)$, parametrized by arclength. Then $p_0:=-A(x_0)\dot \zeta(0)$ satisfies $p_0 \in D^*d_{\mathscr{O}}(x_0)$ and 
 $$
\langle A^{-1}(x_0) p_0, p_0 \rangle = \langle \dot \zeta(0), A(x_0)\dot \zeta(0) \rangle =1.
$$
On the other hand, by the previous Lemma, $p_0 \notin D^*d(x_0)$. This shows that the inclusion in \eqref{eq:ellipsoid-a} is strict.

Since $D^+d(x_0)=\mbox{ co} D^*d(x_0)$, this implies that $\partial D^+d(x_0)$ contains some $p$ such that $\langle A^{-1}(x_0)p,p\rangle <1$. This proves that $\partial D^+d(x_0)\setminus D^*d(x_0)\not=\emptyset$.
Then, the existence of a Lipschitz (nontrivial) singular arc follows by Theorem \ref{t:acprop}. The proof of Theorem \ref{t:propagation} is complete. 
\end{proof}

The following example shows that the singular arc, in general, is not necessarily unbounded. 
Indeed, consider the case of the Euclidean distance in $\R^3$ with the obstacle given by a torus. 
 
  \begin{figure}[h!]
 \begin{tikzpicture}
\draw (-3.5,0) .. controls (-3.5,2) and (-1.5,2.5) .. (0,2.5);
\draw[xscale=-1] (-3.5,0) .. controls (-3.5,2) and (-1.5,2.5) .. (0,2.5);
\draw[rotate=180] (-3.5,0) .. controls (-3.5,2) and (-1.5,2.5) .. (0,2.5);
\draw[yscale=-1] (-3.5,0) .. controls (-3.5,2) and (-1.5,2.5) .. (0,2.5);

\draw (-2,.2) .. controls (-1.5,-0.3) and (-1,-0.5) .. (0,-.5) .. controls (1,-0.5) and (1.5,-0.3) .. (2,0.2);

\draw (-1.75,0) .. controls (-1.5,0.3) and (-1,0.5) .. (0,.5) .. controls (1,0.5) and (1.5,0.3) .. (1.75,0);
 
\filldraw [black] [radius=2pt](-5,0)circle ;
 
 \node [] at (-5.5, 0) {$k_0$};
 \filldraw [black] [radius=2pt](0.5,0)circle ;
 \node [] at (1, 0) {$x$};
\end{tikzpicture}
 \end{figure} 
 
 We assume that the torus is symmetric along a horizontal plane which also contains the target $k_0$. Then it is easy to see that all points $x$ lying in the part of the plane surrounded by the torus  belong to $\Sigma(d)$. In fact, from any such point there are two symmetric minimizers going around the obstacle in opposite directions. Thus, we have a disk of singular points, and from any point of the disk the singularity propagates in the horizontal directions. However, the singularities stop when the obstacle is reached and do not further propagate along the boundary of the obstacle. Notice that, at the singular points described above, $d(x)<d_+$. We find again singular points ``behind'' the torus, i.e. on the part of the horizontal plane outside of the torus and opposite to $k_0$, which instead propagate to infinity. We conjecture that this picture is valid in general, and that under the hypotheses of Theorem \ref{t:propagation} the singularity propagates along an unbounded arc traveling towards the infinity. At the present stage, however, this remains an open question.

%
%
%


\section{The Euclidean case in two dimensions}
\setcounter{equation}{0}
\setcounter{theorem}{0}

In this section, we study the singular set of the Euclidean distance function in $\R^2$ in the presence of a connected obstacle. That is, we assume throughout that $n=2$, $A(x) \equiv I$ and that ${\mathscr{O}} \subset \mathcal{O}$ is connected.
 
  
   Let us denote by $S(k_0)$ the ``shadow'' of the obstacle projected from the target point $k_0$, i.e. 
 $$
 S(k_0)=\{ x \in X \,:\, \mbox{ the segment $[k_0,x]$ is not contained in $X$}\}.
 $$
Then any arc connecting a point $x \in S(k_0)$ to $k_0$ must ``go around'' the obstacle. The feature of the two-dimensional case with a connected obstacle is that there are exactly two well-separated sides along which a trajectory can go. Let us state it in a precise form as follows.

\begin{definition}
Let $x_0,x_1 \in S(k_0)$ and let $\gamma_i:[0,T_i] \to X$, with $i=0,1$ be continuous simple curves such that $\gamma_i(0)=x_i$, $\gamma_i(T_i)=k_0$. We say that $\gamma_0$ and $\gamma_1$ travel {\em along the same side of the obstacle} if there is a one-parameter family of curves $\tilde \gamma_s:[0,1] \to X$ depending continuously on $s$, such that $\tilde \gamma_s$ coincides with $\gamma_i$ up to a reparametrization for $s=0,1$ and such that $\tilde \gamma_s(0) \in S(k_0)$ and $\tilde \gamma_s(1)=k_0$ for all $s \in [0,1]$.
\end{definition}
  
It is easy to see that this defines an equivalence relation between simple curves connecting points $x \in S(k_0)$ to $k_0$  which divides them into exactly two classes. Then we can give the following characterization of the singular set. 
  \begin{theorem}\label{tds}
The singular set satisfies  
  \begin{equation}\label{eq:ds}
  \Sigma (d)=\{ x\in X\ |  \# D^*d(x)=2\}. 
  \end{equation}
Moreover, we have 
\begin{equation}\label{eq:dga}
\overline{\Sigma (d)}\subseteq \{x\in X\ | \ \# \Gamma^*[x]>1\}.
\end{equation}
  \end{theorem}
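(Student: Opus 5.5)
The proof of \eqref{eq:ds} will rest on a two-sided count: each element of $D^*d(x)$ yields a minimizer going around the obstacle along one of the two sides, and two distinct reachable gradients cannot yield minimizers along the same side.

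Fix $x\in\Sigma(d)$, $x\neq k_0$. By the last part of Proposition~\ref{l:vi}, every $p\in D^*d(x)$ is the initial velocity (up to sign, here $A\equiv I$) of some $\gamma_p\in\Gamma^*[x]$. In the Euclidean case a minimizer is a concatenation of straight segments and arcs of $\partial\mathscr O$, and it is simple. If $x\notin S(k_0)$, the segment $[x,k_0]$ is the unique minimizer, so $D^*d(x)$ is a singleton and $x\notin\Sigma(d)$. Hence $x\in S(k_0)$, and each $\gamma_p$ belongs to one of the two side classes.

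The key step is to show that if $\gamma_p$ and $\gamma_{p'}$ travel along the same side, then $p=p'$. Suppose $p\neq p'$. The two curves then leave $x$ in different directions. Since they lie on the same side, the region enclosed between them, bounded by $\gamma_p\cup\gamma_{p'}$, contains no point of $\mathscr O$; this uses the connectedness of $\mathscr O$ and the fact that $\mathscr O$ is not enclosed. Take the first point $z$ after $x$ where the two curves meet again (at the latest $z=k_0$). The two subarcs from $x$ to $z$ bound a disc contained in $X$, and both have the same length $d(x)-d(z)$. Inside an obstacle-free disc, minimizing arcs are straight segments: a minimizer can only bend where it touches $\partial\mathscr O$, and it touches from the outside. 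So both subarcs are the segment $[x,z]$, which contradicts the distinct initial directions.

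I expect this topological step to be the main obstacle. It needs a Jordan-curve argument showing that the bounded component cut out by two same-side curves misses $\mathscr O$, and some care at $z\in\partial\mathscr O$.

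Granting this step, there are at most two values of $p$, one per side. Since $x\in\Sigma(d)$, there are at least two, and so $\#D^*d(x)=2$.

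For \eqref{eq:dga}, take $x_k\in\Sigma(d)$ with $x_k\to x$. Each $x_k$ carries two minimizers on opposite sides, with initial velocities $-p_k\neq -p_k'$. By Theorem~\ref{t:regmin}(i) and Arzel\`a--Ascoli, up to a subsequence both families converge uniformly to minimizers $\gamma,\gamma'\in\Gamma^*[x]$. The side class is stable under uniform limits, because the homotopy can be built from the convergence. Hence $\gamma$ and $\gamma'$ travel on opposite sides and are therefore distinct, even if their initial velocities coincide, as happens at the point $P$ of Section~\ref{whale}. When $x=k_0$ or $x\notin S(k_0)$, this needs separate handling. If $x\notin S(k_0)$ is a limit of shadow points, then $x$ lies on the boundary of the shadow, where a segment tangent to the obstacle and a boundary-hugging path coexist, so $\#\Gamma^*[x]>1$ again follows from the limit argument. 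The case $x=k_0$ is excluded or treated by convention, since $k_0$ is always singular. This gives $\#\Gamma^*[x]>1$.
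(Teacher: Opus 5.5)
Your overall route matches the paper's: discard points outside $S(k_0)$, attach a minimizer to each reachable gradient through Proposition~\ref{l:vi}, rule out two distinct gradients on one side, then use compactness for \eqref{eq:dga}. However, the argument you give for the key step does not work.

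The two subarcs from $x$ to $z$ are the \emph{boundary} of $D$, not arcs inside it. So the principle that minimizing arcs in an obstacle-free disc are straight does not apply. Knowing $D\cap\mathscr O=\emptyset$ still allows either arc to wrap along $\partial\mathscr O$ with the obstacle lying outside $\overline D$, and then that arc is not a segment. What obstacle-freeness gives you is only a sign. Along contact arcs a Euclidean minimizer has $\mathscr O$ on its concave side, so both arcs turn only away from $D$.

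The contradiction is therefore global. One way is the turning tangent theorem: orient $\partial D$ with $D$ on the left. Then $2\pi=\int_{\partial D}\kappa\,ds+\epsilon_x+\epsilon_z$, where $\int_{\partial D}\kappa\,ds\le 0$, $|\epsilon_x|<\pi$ because $p\neq p'$, and $|\epsilon_z|\le\pi$. This is impossible. The paper only asserts this step (``there would be a shorter curve passing between them''), so some argument of this kind has to be supplied.

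The reduction you flagged is also more than a Jordan-curve formality. Being on the same side is a property of the whole curves. If the tails after $z$ separate around $\mathscr O$, as $\gamma_1,\gamma_2$ do after $P$ in Section~\ref{whale}, then the bigon up to $z$ does enclose $\mathscr O$. This case must be excluded separately.

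Your treatment of limit points outside the shadow in \eqref{eq:dga} is wrong. If $x\in\overline{S(k_0)}\setminus S(k_0)$ and $x\neq k_0$, then $[x,k_0]\subset X$ and every other curve from $x$ to $k_0$ is strictly longer. Hence $\Gamma^*[x]=\{[x,k_0]\}$: a tangent segment and a boundary-hugging path never tie. The limit argument therefore cannot give $\#\Gamma^*[x]>1$ there. What has to be shown instead is that such $x$ does not belong to $\overline{\Sigma(d)}$.

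Here is how that goes. If $\Sigma(d)\ni x_h\to x$, both opposite-side minimizers from $x_h$ converge in $C^1$ to the segment. But curves from $x_h$ in $X$ that are $C^1$-close to $[x,k_0]$ all lie on one side. This is because near each point where the segment touches $\partial\mathscr O$, the obstacle lies on one side of it. That yields the contradiction. The paper's sketch passes over this case silently.

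Your exclusion of $k_0$ is right and necessary. $D^*d(k_0)$ is the whole unit circle and $\Gamma^*[k_0]$ is a singleton, so $k_0$ must be removed from both statements.
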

  \begin{proof}

By definition, $x \in \Sigma(d)$ if and only if $\# D^*d(x) \geq 2$. This can only occur at points $x \in S(k_0)$, otherwise the only minimizer is the segment between $x$ and $k_0$ and $D^*d(x)$ is a singleton.
From the properties of minimizers it is easy to see that there cannot be two distinct ones passing along the same side of the obstacle, since there would be a shorter curve passing between them and also avoiding the obstacle. This shows that the minimizers can be at most two, which implies that $\# D^*d(x) = 2$ at any singular point.

%

To prove \eqref{eq:dga} we consider a sequence 
$$
\Sigma (d)\ni x_h\longrightarrow x\in X .
$$
From the previous discussion it follows that, for each $h$, there exist $\gamma_h^1,\gamma_h^2\in \Gamma^* [x_h]$, such that, for each $h$,  all $\gamma_h^1$ pass on one side of the obstacle, while $\gamma_h^2$ pass along the other. Up to a subsequence, we can assume that $\gamma_h^1 \to \gamma_\infty^1$ and $\gamma_h^2 \to \gamma_\infty^2$ in $C^{1,1}$ norm, so that $\gamma_\infty^i \in \Gamma^*[x]$ for $i=1,2$. Since $\gamma_\infty^i$ are limits of curves passing along opposite sides of the obstacle for $i=1,2$, they must be distinct. 
This shows that $\Gamma^*[x]$ contains at least two elements and completes the proof.
\end{proof}

\begin{remark}
The inclusion \eqref{eq:dga} is in general false if we consider a general Riemannian metric
or if $n>2$. We observe also that, by Proposition \ref{l:vi}, the property $ \# \Gamma^*[x]>1$ is weaker than $x \in \Sigma(d)$; however, the result is optimal, as shown by the behaviour of the point $P$ in the example of Section 5.
\end{remark}

Let us also observe that we can strengthen the propagation result of Theorem \ref{t:propagation}
to obtain a two-sided singular arc along which the distance to the obstacle decreases.

\begin{corollary}[Propagation of singularities]\label{cprop}
Let $x\in \Sigma (d)\cap \mathring{X}$. Then, there exists a Lipschitz curve 
$$
\gamma:[-\sigma,\sigma ]\longrightarrow \Sigma (d)
$$
such that  $\gamma (0)=x$ and 
\begin{equation}\label{kp}
d_{\mathscr{O}}(\gamma (-t))>d_{\mathscr{O}}(x)>d_{\mathscr{O}}(\gamma (t)) \quad \text{ for every }t\in ]0,\sigma ].
\end{equation}
 \end{corollary}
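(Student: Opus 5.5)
We work near $x$, where $x\in\mathring X$, $x\neq k_0$ (a singular point is never on a segment to $k_0$, and $k_0$ is excluded as in Theorem \ref{t:propagation}). Here $d\in SC^1$ by Theorem \ref{t:rb}(i), and by Theorem \ref{tds} we have $D^*d(x)=\{p_1,p_2\}$ with $p_1\neq p_2$ unit vectors. Hence $D^+d(x)=[p_1,p_2]$ is a segment. Every point of the open segment $]p_1,p_2[$ lies in $\partial D^+d(x)\setminus D^*d(x)$ (the segment has empty interior in $\R^2$). At such a point $p_0$, the normal cone to $D^+d(x)$ is the whole line $\R\nu$, where $\nu\perp p_2-p_1$. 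By Theorem \ref{t:acprop}, applied once with $q_0=\nu$ and once with $q_0=-\nu$, we get two Lipschitz singular arcs ${\bf x}_\pm:[0,\sigma[\to\Sigma(d)$ with ${\bf x}_\pm(0)=x$ and right derivatives $\mp\nu$. Concatenating them, $\gamma(t)={\bf x}_+(t)$ for $t\ge0$ and $\gamma(t)={\bf x}_-(-t)$ for $t\le 0$, and shrinking $\sigma$, gives a Lipschitz curve on $[-\sigma,\sigma]$ with $\gamma(0)=x$ and derivative $-\nu$ at $0$ from both sides. The two branches leave $x$ in opposite directions.

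The remaining task is to choose the sign of $\nu$ and prove \eqref{kp}. Take a minimizer $\zeta$ for $d_{\mathscr O}(x)$ and set $p_{\mathscr O}=-\dot\zeta(0)\in D^*d_{\mathscr O}(x)$. Then $d_{\mathscr O}$ is semiconcave near $x$ and, for $t>0$,
\[
d_{\mathscr O}(x+tv)\le d_{\mathscr O}(x)+t\min_{q\in D^*d_{\mathscr O}(x)}\langle q,v\rangle+o(t).
\]
It therefore suffices to choose the direction $v=-\nu$ with $\langle p_{\mathscr O},-\nu\rangle<0$, because then $d_{\mathscr O}$ strictly decreases along ${\bf x}_+$ for small $t$. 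For the other branch we need $\langle q,\nu\rangle>0$ for all $q\in D^+d_{\mathscr O}(x)$, which requires that $D^+d_{\mathscr O}(x)$ lie strictly on one side of the line $\R(p_2-p_1)$. This separation is the key geometric step.

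The separation uses planarity. The unit vectors $p_1,p_2$ split the unit circle into two arcs, and the chord $[p_1,p_2]$ separates the short arc from the rest of the disc. The two minimizers $\gamma^1,\gamma^2$ from $x$ go around the obstacle on opposite sides. I claim that every direction $-q$ with $q\in D^*d_{\mathscr O}(x)$ points into the sector between $-p_1$ and $-p_2$ that contains the obstacle, i.e. the region bounded by $\gamma^1$, $\gamma^2$ and $\mathscr O$. The reason is that a segment from $x$ to its nearest obstacle point cannot cross $\gamma^1$ or $\gamma^2$ before touching $\mathscr O$. Otherwise a concatenation argument would produce a point on a minimizer with two minimizers (one of them non-tangent at contact), contradicting Proposition \ref{l:vi}, or would contradict minimality of distance. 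Moreover, by Lemma \ref{le1}, $-q$ is never equal to $-p_i$. Hence $D^*d_{\mathscr O}(x)$ lies on the open arc strictly between $p_1$ and $p_2$ on one fixed side, so its convex hull lies strictly on one side of the chord line. Since $|q|=1$, this is the short-arc side exactly when that arc lies beyond the chord; if it is the long arc, its points still lie strictly beyond the chord line in the opposite direction. In either case $\langle q,\nu\rangle$ has a constant strict sign, and we choose $\nu$ so that this sign is positive. Compactness of $D^*d_{\mathscr O}(x)$ then gives a uniform margin, hence strict monotonicity on both sides for small $\sigma$. This yields \eqref{kp}.

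The main obstacle is this separation claim: showing that the $d_{\mathscr O}$-minimizing directions lie in the open sector enclosed by the two target-minimizers, on the side of the obstacle. This topological step uses $n=2$, the connectedness of $\mathscr O$, and the disjointness of minimizers away from the obstacle from Proposition \ref{p:nuova}. I would first prove it by a Jordan curve argument applied to the closed curve formed by $\gamma^1$, $\gamma^2$ and an arc of $\partial\mathscr O$.
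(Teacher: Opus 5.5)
The overall structure matches the paper's proof: $D^+d(x)=[p_1,p_2]$ by Theorem~\ref{tds}, the normal cone at points of $]p_1,p_2[$ is $\R\nu$ with $\nu\perp p_2-p_1$, and Theorem~\ref{t:acprop} is applied with $\pm\nu$. You also correctly flag something the paper's proof passes over. The one-sided derivative of $d_{\mathscr O}$ in a direction $v$ is $\min_{q\in D^+d_{\mathscr O}(x)}\langle q,v\rangle$, so the backward branch needs $\langle q,\nu\rangle>0$ for \emph{all} $q\in D^+d_{\mathscr O}(x)$, not only for the single $p=(x-y)/|x-y|$ that the paper uses.

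\textbf{The gap is in how you obtain this.} Grant your Jordan-curve separation claim, which is only sketched. Knowing that $D^*d_{\mathscr O}(x)$ lies on one open arc between $p_1$ and $p_2$ places it strictly on one side of the chord line $\{z:\langle z,\nu\rangle=\langle p_1,\nu\rangle\}$. That line misses the origin unless $p_2=-p_1$, whereas what you need is the side of the parallel line $\R(p_2-p_1)$ through $0$. The two agree on the short arc. The long arc, however, contains both unit vectors $\pm(p_2-p_1)/|p_2-p_1|$, where $\langle q,\nu\rangle=0$, and $\langle q,\nu\rangle$ takes both signs on it. So the sentence ``in either case $\langle q,\nu\rangle$ has a constant strict sign'' is false in the long-arc case. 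Your separation claim produces exactly that case whenever the sector between $-p_1$ and $-p_2$ containing the obstacle is wider than $\pi$, i.e.\ when $x$ sits in a cavity of $\mathscr O$.

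\textbf{This cannot be patched, because \eqref{kp} itself fails there.} Let $\mathscr O$ be a smoothing of the C-shaped set $\{1\le|z|\le2\}\setminus\{z_1>0,\ |z_2|<h\}$ with $h>0$ small, take $k_0=(-5,0)$ and $x=0$.
\begin{itemize}
\item Neither horizontal direction can start a minimizer: leftward hits $\partial\mathscr O$ transversally, and rightward never touches it.
\item Hence, by the symmetry $z_2\mapsto-z_2$, the point $0$ has two minimizers leaving through the slot with mirror-image velocities. So $0\in\Sigma(d)$ and $\nu=(\pm1,0)$.
\item Near $0$, $\Sigma(d)$ is the set where the shortest paths above and below the obstacle have equal length, which is a segment of the real axis.
\item But $d_{\mathscr O}(0)=1$, while $d_{\mathscr O}(s,0)\le 1-|s|$ for $s<0$ (use the point $(-1,0)$).
\item Also $d_{\mathscr O}(s,0)^2\le 1-2s\cos\phi+s^2<1$ for small $s>0$ (use a point $(\cos\phi,\sin\phi)\in\partial\mathscr O$ with $\cos\phi>0$).
\end{itemize}
Thus $d_{\mathscr O}$ restricted to $\Sigma(d)$ has a strict local maximum at $0$, and the left inequality in \eqref{kp} cannot hold for any curve as in the statement. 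Consistently with your separation claim, $D^*d_{\mathscr O}(0)$ consists of the vectors $-(\cos\phi,\sin\phi)$. It lies on the long arc between $p_1$ and $p_2$, and $\langle q,\nu\rangle$ changes sign on it.

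The paper's argument breaks at the same place. Its claim that $p_1,p_2$ lie in opposite semicircles cut out by $\pm N$ is false for nearest points $y$ next to the slot.

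What your approach does establish, once the separation claim is written out, is \eqref{kp} when the obstacle sector at $x$ has opening at most $\pi$, for instance for convex $\mathscr O$.
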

\begin{proof}
For a given $x\in \Sigma (d)\cap \mathring{X}$, let $y\in \partial\mathscr{O}$ such that $d_{\mathscr{O}}(x)=|y-x|$.
By Lemma \ref{le1}, we have that 
$$
N:=\frac{x-y}{|x-y|}\notin D^*d(x). 
$$
Moreover, by Theorem \ref{tds}, there exist two distinct unit vectors $p_1, p_2\in D^*d(x)$ such that
$$
D^+d(x)=[p_1,p_2].
$$
The vectors $p_1$ and $p_2$ cannot both lie in the same semicircle between $N$ with $-N$, otherwise the corresponding geodesics, which have initial velocity $p_1$ and $p_2$ respectively, would go around the obstacle on the same side, which contradicts minimality. This implies that $p_2-p_1$ is not parallel to $N$, and so there is a vector $\nu \perp p_2-p_1$ such that 
$$
\langle \nu , x-y\rangle >0 . 
$$

By construction, $\nu$ is in the normal cone to $D^+d(x)$ at all $q \in \,]p_1,p_2[\,$. Since $\,]p_1,p_2[\,= \partial D^+d(x)\setminus D^*d(x)$, Theorem \ref{t:acprop} implies that there exists a singular curve $\gamma:[0,\sigma] \to \Sigma(d)$ such that $\gamma(0)=x$ and
$$
\dot{\gamma}^+(0)=-\nu . 
$$
Therefore, 
$$
\lim_{t\to 0^+} \frac{d_{\mathscr{O}}(\gamma (t))-d_{\mathscr{O}}(x)} t=-\langle \nu , p\rangle ,  
$$
for any $p\in D^+d_{\mathscr{O}}(x)$.  Taking $p=(x-y)/|x-y|$, we obtain  
$$
\lim_{t\to 0^+} \frac{d_{\mathscr{O}}(\gamma (t))-d_{\mathscr{O}}(x)} t<0, 
$$
which proves the second inequality in \eqref{kp} for a sufficiently small $\sigma$. We can repeat the construction with $\nu$ replaced by $-\nu$ and obtain an arc $\tilde \gamma:[0,\sigma] \to \Sigma(d)$ propagating in the opposite direction. By setting $\gamma(t)=\tilde \gamma(-t)$ for $t \in [-\sigma,0]$ we conclude our proof. 
 \end{proof}



\section*{Declarations}
\begin{itemize}
\item {\bf Conflict of interest:} The authors state that there is no conflict of interest. 
\item{\bf Funding:} This work was partly supported by:
\begin{itemize}
\item the Excellence Department Project awarded to the Department of Mathematics, University of Rome Tor Vergata, CUP E83C23000330006;
\item the PRIN 2022 PNRR project, CUP E53D23017910001, funded by the European Union---Next Generation EU;
\item the National Group for Mathematical Analysis, Probability and Applications (GNAMPA) of the Italian Istituto Nazionale di Alta Matematica ``Francesco Severi''. 
\end{itemize}
\end{itemize}


\begin{thebibliography}{}


  
  %
\bibitem{A3}  {\sc P.Albano}, {\it On the local semiconcavity of the solutions of the eikonal equation},  Nonlinear Anal. 73 (2010), no. 2, 458--464. 
  
  


\bibitem{A6} {\sc P.Albano}, {\it On the regularity of the distance  
near the boundary of an obstacle}, J. Math. Anal. Appl. 518 (2023), no. 1, Paper No. 126680, 12 pp.     


\bibitem{ABC1} {\sc P.Albano, V.Basco and P.Cannarsa}, {\it On the extension problem for semiconcave functions with fractional modulus}, Nonlinear Anal. 216 (2022), Paper No. 112669, 12 pp.

\bibitem{ABC2} {\sc P.Albano, V.Basco and P.Cannarsa}, {\it The distance function in the presence of an obstacle}, Calc. Var. Partial Differential Equations 61 (2022), no. 1, Paper No. 13, 26 pp.


\bibitem{AC1} {\sc P.Albano and P.Cannarsa}, {\it Structural properties of singularities of semiconcave functions}, Ann. Scuola Norm. Sup. Pisa Cl. Sci. (4)
 28 (1999), no. 4, 719--740.

\bibitem{AA}{\sc R. Alexander and S. Alexander}, {\it Geodesics in Riemannian manifolds-with-boundary},  Indiana University Mathematics Journal, 30(4), (1981), 481--488.

\bibitem{Al} {\sc S.Alexander},  Distance geometry in Riemannian manifolds-with-boundary. Global differential geometry and global analysis (Berlin, 1979), pp. 12--18, Lecture Notes in Math., 838, Springer, Berlin, 1981.

 \bibitem{ABB} {\sc S.B.Alexander,  I.D.Berg and R.L.Bishop}, {\it  Cut loci, minimizers, and wavefronts in Riemannian manifolds with boundary}, Michigan Math. J. 40 (1993), no. 2, 229--237.

\bibitem{Bredon}{\sc G.E.Bredon}, Topology and geometry, Springer, New York, 1993. 

\bibitem{Ca} {\sc A.Canino}, {\it  On p-convex sets and geodesics},  J. Differential Equations 75 (1988), no. 1, 118--157.

\bibitem{Cl}{\sc F.H.Clarke}, Optimization and nonsmooth analysis, Classics in applied mathematics 5, SIAM, 1990.   


\bibitem{CS} {\sc P.Cannarsa and C.Sinestrari}, Semiconcave functions, Hamilton--Jacobi equations, and optimal control. Birkh\"auser, Boston, 2004.

\bibitem{CDL} {\sc I.Capuzzo Dolcetta and P.L.Lions},  {\it Hamilton--Jacobi equations with state constraints}, 
Trans. Amer. Math. Soc. 318 (1990), no. 2, 643--683.


\bibitem{H}{\sc L.H\"ormander}, The analysis of linear partial differential operators I, Springer 1990.
%
  
\bibitem{L} {\sc P.L.Lions},  Generalized solutions of Hamilton--Jacobi equations. Research Notes in Mathematics, 69. Pitman, Boston, Mass.-London, 1982.

\bibitem{MS} {\sc A.Marino and D. Scolozzi},  {\it Geodesics with obstacles} (Italian) Boll. Un. Mat. Ital. B (6) 2 (1983), no. 1, 1--31.

\bibitem{RS} {\sc J.Rauch and J.Sj\"ostrand}, {\it Propagation of analytic singularities along diffracted rays}, 
Indiana Univ. Math. J. 30 (1981), no. 3, 389--401.



\end{thebibliography}
\end{document}